\theoremstyle{plain}
\newtheorem{defi}{Definition}
\newtheorem{prop}{Proposition}[section]
\newtheorem{thm}{Theorem}[section]
\newtheorem{lem}{Lemma}[section]
\newtheorem{rem}{Remark}[section]
\newtheorem{theoA}{Theorem}
\newtheorem{propA}{Proposition}
\renewcommand{\l}{\left}
\renewcommand{\r}{\right}
\renewcommand{\o}[1]{\overline{#1}}
\renewcommand{\epsilon}{\varepsilon}
\newcommand{\Id}{\mathrm{Id}}
\newcommand{\Tr}{\mathrm{Tr}}
\DeclareRobustCommand{\p}[1]{%
  \mathpalette\do@cev{#1}%
}
\newcommand{\do@cev}[2]{%
  \fix@cev{#1}{+}%
  \reflectbox{$\m@th#1\vec{\reflectbox{$\fix@cev{#1}{-}\m@th#1#2\fix@cev{#1}{+}$}}$}%
  \fix@cev{#1}{-}%
}
\newcommand{\fix@cev}[2]{%
  \ifx#1\displaystyle
    \mkern#23mu
  \else
    \ifx#1\textstyle
      \mkern#23mu
    \else
      \ifx#1\scriptstyle
        \mkern#22mu
      \else
        \mkern#22mu
      \fi
    \fi
  \fi
}
\newcommand{\R}{\mathbb{R}}
\newcommand{\calF}{\mathcal{F}}
\newcommand{\G}{\mathcal{G}}
\newcommand{\calE}{\mathcal{E}}
\newcommand{\E}{\mathbb{E}}
\renewcommand{\P}{\mathbb{P}}
\newcommand{\ind}{\mathds{1}}
\title{A multi-dimensional version of Lamperti's relation and the Matsumoto-Yor opposite drift theorem}
\author{Thomas \textsc{Gerard}, Christophe \textsc{Sabot} and Xiaolin \textsc{Zeng}}
\begin{document}

\maketitle

\begin{abstract}
A classic result on the \(1\)-dimensional Brownian motion shows that conditionally on its first hitting time of \(0\), it has the distribution of a \(3\)-dimensional Bessel bridge. By applying a certain time change to this result, Matsumoto and Yor showed in \cite{MY01} a theorem giving a relation between Brownian motions with opposite drifts. The relevant time change is the one appearing in Lamperti's relation.

In \cite{SabZenEDS}, Sabot and Zeng showed that a family of Brownian motions with interacting drifts, conditioned on the vector \(T^0\) of hitting times of \(0\), also had the distribution of independent \(3\)-dimensional Bessel bridges. Moreover, the distribution of \(T^0\) is related to a random potential \(\beta\) that appears in the study of the Vertex Reinforced Jump Process.

The aim of this paper is to show a multivariate version of the Matsumoto-Yor opposite drift theorem, by applying a Lamperti-type time change to the previous family of interacting Brownian motions. Difficulties arise since the time change progresses at different speeds on different coordinates.
\end{abstract}

\section{Introduction}
Let us first recall a well-known result regarding hitting times of the Brownian motion with drift (see \cite{W74} and \cite{V91}).
\begin{propA}
\label{prop-1d-bmhit}
Let \(\theta>0\) and \(\eta\geq 0\) be fixed, and let \(B=(B(t))_{t\ge 0}\) be a standard \(1\)-dimensional Brownian motion. We define the  Brownian motion \(X=(X(t))_{t\ge 0}\) with drift \(\eta\) by
\[X(t)=\theta + B(t) - \eta t,\ t\ge 0.\]
If \(T^0\) is the first hitting time of \(0\) by \(X(t)\), i.e.
\begin{equation}
\label{equation-T0}
T^0=\inf\{t\ge 0,\ B(t)+\theta-\eta t=0\},
\end{equation}
then the distribution of \(T^0\) is given by
\begin{equation}\label{T.dist}
\frac{\theta}{\sqrt{2\pi t^3}}\exp\l( -\frac{1}{2}  \frac{(\theta - \eta t)^2}{t}  \r)\ind_{\{t\geq 0\}}dt.
\end{equation}
Moreover, conditionally on \(T^0\), \(\big( X(t) \big)_{0\leq t\leq T^0}\) has the same distribution as a \(3\)-dimensional Bessel bridge from \(\theta\) to \(0\) on the time interval \([0,T^0]\).
\end{propA}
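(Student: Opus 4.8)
The proof naturally splits into the computation of the law \eqref{T.dist} of $T^0$ and the identification of the conditioned path; the plan is to handle the drift $\eta$ by a Girsanov transformation so that both reduce to the driftless case. For $\eta=0$ the law of $T^0$ is the classical inverse-Gaussian computation: since $B(0)+\theta=\theta>0$ and $B$ is continuous, $\{T^0\le t\}=\{\inf_{s\le t}B(s)\le-\theta\}$, the reflection principle gives $\P(T^0\le t)=2\,\P(B(t)\le-\theta)$, and differentiating in $t$ yields the density $\tfrac{\theta}{\sqrt{2\pi t^3}}e^{-\theta^2/(2t)}$, i.e.\ \eqref{T.dist} with $\eta=0$. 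For general $\eta>0$, I would work under a reference probability $\mathbb{Q}$ under which $W:=X-\theta$ is a standard Brownian motion and put $\tfrac{d\P}{d\mathbb{Q}}\big|_{\calF_t}=\exp\!\l(-\eta W(t)-\tfrac12\eta^2 t\r)$; Girsanov's theorem then makes $W(t)+\eta t$ a $\P$-Brownian motion, so under $\P$ the process $X=\theta+W$ is exactly the drifted motion of \eqref{equation-T0}. Since $\theta+W_{t\wedge T^0}\ge 0$, the stopped exponential is a bounded martingale and optional stopping gives, for every bounded path functional $F$,
\[
\E_\P\big[F\big((X_s)_{s\le T^0}\big)\big]=\E_{\mathbb{Q}}\Big[F\big((X_s)_{s\le T^0}\big)\,e^{-\eta W_{T^0}-\frac12\eta^2 T^0}\Big].
\]
On $\{T^0<\infty\}$ one has $W_{T^0}=-\theta$, so the weight is $e^{\eta\theta-\frac12\eta^2 T^0}$, a function of $T^0$ only; taking $F$ a function of $T^0$ and inserting the $\eta=0$ density of $T^0$ under $\mathbb{Q}$ produces \eqref{T.dist} after completing the square, $\eta\theta-\tfrac12\eta^2 t-\tfrac{\theta^2}{2t}=-\tfrac{(\theta-\eta t)^2}{2t}$.

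\emph{Reduction of the bridge statement.} The weight $e^{\eta\theta-\frac12\eta^2 T^0}$ above depends only on $T^0$, hence it cancels upon disintegrating over $T^0$: from $\E_\P[F\,g(T^0)]=\E_{\mathbb{Q}}[F\,g(T^0)\,e^{\eta\theta-\frac12\eta^2 T^0}]$ for all bounded $F,g$ one gets $\E_\P[\,F\mid T^0=t\,]=\E_{\mathbb{Q}}[\,F\mid T^0=t\,]$ for a.e.\ $t$. It therefore suffices to prove, for a \emph{standard} Brownian motion $X$ started at $\theta$, that $(X_s)_{0\le s\le T^0}$ conditioned on $T^0=t$ is a $3$-dimensional Bessel bridge from $\theta$ to $0$ on $[0,t]$.

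\emph{The driftless identification.} Here I would match finite-dimensional distributions, using the strong Markov property and Doob's $h$-transform description (with $h(x)=x$) of the $3$-dimensional Bessel process from $\theta$ as Brownian motion from $\theta$ killed at $0$ and reweighted by $X_s/\theta$ on $\{T^0>s\}$. Fix $0<s<t$. By the strong Markov property at time $s$ and the first part of the proposition applied from the level $X_s>0$ (on which the path has not yet hit $0$), conditionally on $T^0=t$ the restriction $(X_u)_{u\le s}$ has law proportional to (the killed-Brownian law on $[0,s]$ with endpoint $X_s\in dy$) times $g_{t-s}(y)$, where $g_r(y)=\tfrac{y}{\sqrt{2\pi r^3}}e^{-y^2/(2r)}$ is the density \eqref{T.dist} with $\eta=0$. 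On the other hand, the Bessel bridge from $\theta$ to $0$ of length $t$, restricted to $[0,s]$, is the $\mathrm{BES}(3)$ law on $[0,s]$ reweighted by the $\mathrm{BES}(3)$ entrance density at $0$ at time $t-s$; since the $\mathrm{BES}(3)$ law on $[0,s]$ is itself the killed-Brownian law reweighted by $y/\theta$, and the entrance density satisfies $q_r(y,0^+)\propto e^{-y^2/(2r)}$ — from $q_r(y,z)=\tfrac{z}{y}\big(p_r(y-z)-p_r(y+z)\big)\sim\tfrac{2z^2}{r}\,p_r(y)$ as $z\downarrow 0$, with $p_r(u)=\tfrac{1}{\sqrt{2\pi r}}e^{-u^2/(2r)}$ — its $[0,s]$-marginal is proportional to (the killed-Brownian law with endpoint $X_s\in dy$) times $\tfrac{y}{\theta}\,e^{-y^2/(2(t-s))}$. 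The two marginals coincide because $g_r(y)\propto y\,e^{-y^2/(2r)}$, and since both are probability measures they are equal; the same computation at times $s_1<\dots<s_k$ gives equality of all finite-dimensional distributions, which upgrades to equality in law by continuity of the two processes.

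\emph{Main obstacle.} The delicate step is the last one: giving rigorous meaning to the Bessel bridge ``ending at $0$'' and to the entrance density $q_r(y,0^+)$, and justifying the limit $z\downarrow 0$ inside the expectation. This is most cleanly done either by defining the bridge directly through the $\mathrm{BES}(3)$ semigroup and verifying the absolute-continuity relation of its $[0,s]$-restriction, or by realizing it as the weak limit of $\mathrm{BES}(3)$ bridges to $z$ as $z\downarrow 0$. An alternative route bypassing the degenerate endpoint is Williams' time-reversal theorem \cite{W74}: reversing time turns $(X_{T^0-s})_{0\le s\le T^0}$ into a $3$-dimensional Bessel process from $0$ stopped at its last passage time at level $\theta$, after which conditioning that passage time to equal $t$ and using reversibility of $\mathrm{BES}(3)$ bridges — $\mathrm{BES}(3)$ is reversible for $x^2\,dx$ since $xy\big(p_t(x-y)-p_t(x+y)\big)$ is symmetric in $x,y$ — gives the statement.
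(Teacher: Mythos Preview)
The paper does not prove Proposition~\ref{prop-1d-bmhit}: it is quoted in the introduction as a classical result, with references to Williams~\cite{W74} and Vallois~\cite{V91}, and is used as the starting point for the rest of the article. There is therefore no ``paper's own proof'' to compare against.

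Your argument is correct and follows a standard route. The Girsanov reduction is carried out cleanly: the stopped exponential $\exp(-\eta W_{t\wedge T^0}-\tfrac12\eta^2(t\wedge T^0))$ is indeed bounded by $e^{\eta\theta}$ because $W\ge-\theta$ before $T^0$, so optional stopping is justified; the weight $e^{\eta\theta-\frac12\eta^2 T^0}$ is $\sigma(T^0)$-measurable, which both yields \eqref{T.dist} from the driftless density and makes the conditional law given $T^0$ the same under $\P$ and $\mathbb{Q}$. For the driftless identification, your marginal computation is right: the $h$-transform gives the $\mathrm{BES}(3)$ law on $[0,s]$ as killed Brownian motion reweighted by $y/\theta$, and the limit $q_r(y,z)\sim \tfrac{2z^2}{r}p_r(y)$ as $z\downarrow 0$ produces the factor proportional to $y\,e^{-y^2/(2(t-s))}$, matching $g_{t-s}(y)$. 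The one point that deserves care, as you flag, is making the bridge to $0$ rigorous; the Williams time-reversal you mention at the end is precisely the content of~\cite{W74}, so that alternative is in fact the closest thing to the proof the paper is implicitly invoking.
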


When \(\eta=0\), \textit{i.e.} \(X\) is a Brownian motion without drift, then \eqref{T.dist} is the density of \(\frac{1}{2\gamma}\), where \(\gamma\) is a Gamma random variable with parameter \((\frac{1}{2},\theta^2)\). This density defines the Inverse Gamma distribution. If \(\eta>0\), \eqref{T.dist} is the density of the Inverse Gaussian distribution with parameter \((\frac{\theta}{\eta},\theta^2)\), which we denote \(\operatorname{IG}(\frac{\theta}{\eta},\theta^2)\).

In \cite{SabZenEDS}, Sabot and Zeng gave a multivariate version of Proposition \ref{prop-1d-bmhit}, which is presented here as Theorem \ref{thm:SDE.X}. The multivariate version concerns a family of Brownian motions indexed by a finite graph with interacting drifts, represented as the solution of a system of stochastic differential equations (SDEs). The hitting times of \(0\) for this family are related to a random potential, which we denote \(\beta\), introduced by Sabot, Tarrès and Zeng in \cite{SabTarZen} and generalized in \cite{Let19}. The distribution of this random potential can be interpreted as a multi-dimensional version of the Inverse Gaussian distribution. It is closely related to the supersymmetric hyperbolic sigma model studied by Disertori, Spencer and Zirnbauer in \cite{DisSpeZir} and \cite{Disertori2010}, and was central in the analysis of the Vertex Reinforced Jump Process in \cite{SabTar}, \cite{SabTarZen} and \cite{SabZen}. See also \cite{Lupu2019a,Bauerschmidt2019b,Bauerschmidt2019,Collevecchio2018,Chang2019,Merkl2019a} for related models in statistical mechanics and random operators.

Another related result is the Matsumoto-Yor opposite drift theorem~\cite{MY01}. This theorem concerns a Brownian motion with negative drift \(-\mu\) which, when conditioned on some exponential functional of its sample path, can be represented as a Brownian motion with opposite drift \(\mu\), with an additional corrective term. A version of this result is stated in Theorem \ref{thm:opp.dr}, in the special case where \(\mu=\frac{1}{2}\). A proof of the theorem relies on applying Lamperti's relation to the classic result on hitting times of the Brownian motion. Lamperti's relation, presented in Proposition \ref{prop:lamperti}, provides a way to write any Bessel process with index \(\mu\) as the exponential of a time-changed Brownian motion with drift \(\mu\).

The aim of this article is to obtain a multi-dimensional version of the opposite drift theorem, by applying an analogue of Lamperti's time change to the family of interacting Brownian motions given in Theorem \ref{thm:SDE.X}. Difficulties arise in applying a time change to the interaction term, since the time change is different on every coordinate of the process. We can overcome this problem in two different ways : either by using the representation given in Theorem \ref{thm:SDE.X}, and applying the time change to each independent 3-dimensional Bessel bridge ; or by using a form of strong Markov property verified by these interacting Brownian motions (c.f. Theorem \ref{thm:Markov.X} or Theorem 2 in \cite{SabZenEDS}).

\section{Statement of the results}
\subsection{Opposite drift theorem and Lamperti's relation}\label{sec:opp.dr.lamp}

Let us present a version of the Matsumoto-Yor opposite drift theorem from \cite{MY01}, in the specific case where the drift \(\mu\) is \(\frac{1}{2}\), and with an added term depending on \(\eta\).

\begin{theoA}\label{thm:opp.dr}[Theorem 2.2 and Proposition 3.1 in \cite{MY01}]
Let \(\theta>0\) and \(\eta\geq 0\) be fixed, and let \(B\) be a standard \(1\)-dimensional Brownian motion. We define the process \(\rho\) as the solution of the following SDE :
\[
\rho(u)= \log(\theta) + B(u) - \frac{1}{2}u -\int_0^u \eta e^{\rho(v)}dv
\]
for \(u\geq 0\).

Let us define \(T(u)=\int_0^u e^{2\rho(v)}dv\). Then :
\begin{itemize}
\item[(i)] We have 
\[
T(u)\xrightarrow[u\to\infty]{a.s.} T^0,
\]
where \(T^0\) is distributed according to 
\[
\frac{\theta}{\sqrt{2\pi t^3}}\exp\l( -\frac{1}{2}  \frac{(\theta - \eta t)^2}{t}  \r)\ind_{\{t\geq 0\}}dt.
\]
\item[(ii)] Conditionally on \(T^0\), there exists a standard \(1\)-dimensional Brownian motion \(\hat{B}\) such that for \(u\geq 0\),
\[
\rho(u)=\log(\theta)+\hat{B}(u)+\frac{1}{2}u + \log\l( \frac{T^0-T(u)}{T^0} \r).
\]
\end{itemize}
\end{theoA}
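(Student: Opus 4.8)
The plan is to prove both statements by applying Lamperti's time change (Proposition~\ref{prop:lamperti}) to the exponential of $\rho$, and then reading off the distribution of the limiting time from the hitting-time description of Proposition~\ref{prop-1d-bmhit}. For the forward direction, I would first note that since $d\rho(u)=dB(u)-\tfrac12\,du-\eta e^{\rho(u)}\,du$, It\^o's formula gives, for $Z(u):=e^{\rho(u)}$,
\[
dZ(u)=Z(u)\,dB(u)-\eta Z(u)^2\,du, \qquad\text{so}\qquad Z(u)=\theta+\int_0^u e^{\rho(v)}\,dB(v)-\eta\,T(u).
\]
The map $T$ is continuous and strictly increasing with $T(0)=0$, hence has a continuous inverse $\tau$ on $[0,T^0)$ with $T^0:=\lim_{u\to\infty}T(u)\in(0,\infty]$. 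The continuous local martingale $M(u):=\int_0^u e^{\rho(v)}\,dB(v)$ has bracket $\langle M\rangle_u=T(u)$, so by the Dambis--Dubins--Schwarz theorem $\gamma(t):=M(\tau(t))$ is a standard Brownian motion on $[0,T^0)$ and
\[
X(t):=Z(\tau(t))=e^{\rho(\tau(t))}=\theta+\gamma(t)-\eta t,\qquad 0\le t<T^0,
\]
so $X$ is a Brownian motion with drift $-\eta$ started at $\theta$, run up to time $T^0$, and strictly positive on $[0,T^0)$.

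Next I would identify $T^0$ with the first hitting time of $0$ by $X$. From the SDE one has $\rho(u)\le\log\theta+B(u)-\tfrac12 u\to-\infty$ a.s., hence $e^{\rho(u)}\to 0$ and $T^0=\int_0^\infty e^{2\rho(v)}\,dv\le\theta^2\int_0^\infty e^{2B(v)-v}\,dv<\infty$ a.s., so $T^0<\infty$ and, by continuity, $X(T^0-)=0$. Since $X>0$ on $[0,T^0)$, this gives $T^0=\inf\{t\ge 0:\ \theta+\gamma(t)-\eta t=0\}$; Proposition~\ref{prop-1d-bmhit} then yields both the almost sure convergence $T(u)\to T^0$ and the stated density of $T^0$, which is part~(i). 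The same proposition also tells us that, conditionally on $T^0$, the path $\big(X(t)\big)_{0\le t\le T^0}$ is a $3$-dimensional Bessel bridge from $\theta$ to $0$ on $[0,T^0]$; this is the input for part~(ii).

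For part~(ii) I would reverse the time change, working conditionally on $T^0$. A $3$-dimensional Bessel bridge from $\theta$ to $0$ on $[0,T^0]$ solves
\[
dX(t)=d\beta(t)+\Bigl(\tfrac{1}{X(t)}-\tfrac{X(t)}{T^0-t}\Bigr)dt,\qquad X(0)=\theta,
\]
for some standard Brownian motion $\beta$. Applying It\^o's formula to $\rho(u)=\log X(T(u))$ and using $dT(u)=e^{2\rho(u)}\,du=X(T(u))^2\,du$, the martingale part becomes $\hat B(u):=\int_0^u X(T(v))^{-1}\,d\beta(T(v))$, which has bracket $\int_0^u X(T(v))^{-2}\,dT(v)=u$ and is thus a standard Brownian motion by L\'evy's characterization, while the finite-variation part equals
\[
\int_0^u\Bigl(\tfrac12-\tfrac{e^{2\rho(v)}}{T^0-T(v)}\Bigr)dv=\tfrac12 u+\log\!\Bigl(\tfrac{T^0-T(u)}{T^0}\Bigr),
\]
using $\tfrac{d}{dv}\log(T^0-T(v))=-e^{2\rho(v)}/(T^0-T(v))$. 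Together with $\rho(0)=\log\theta$ this is exactly the representation asserted in (ii).

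The step I expect to be the main obstacle is the careful bookkeeping around the conditioning on $T^0$ in part~(ii) and around the two time changes: one must check that $\tau$ is a bijection of $[0,T^0)$ onto $[0,\infty)$ (so that the formula is valid for all $u\ge 0$), that the time-changed filtrations make $\gamma$, $\beta$ and $\hat B$ genuine Brownian motions with the right independence/measurability properties, and above all that $T^0=\lim_u T(u)$ coincides \emph{without any gap} with the hitting time appearing in Proposition~\ref{prop-1d-bmhit}, i.e. that $X$ neither reaches $0$ before $T^0$ nor fails to reach it in the limit. The It\^o computations above are routine once these time-change and measurability points are settled.
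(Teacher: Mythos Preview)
Your proof is correct and follows the paper's approach: connect $\rho$ to a drifted Brownian motion $X$ via Lamperti's time change and invoke Proposition~\ref{prop-1d-bmhit}; your It\^o computation for part~(ii) is exactly the one the paper carries out when deriving~\eqref{tc.Bb}. The only cosmetic difference is the direction of the construction---the paper's sketch starts from $X(t)=\theta+B(t)-\eta t$ and defines $\rho(u)=\log X(T(u))$, whereas you start from $\rho$ (as in the theorem's statement) and recover $X$ via Dambis--Dubins--Schwarz, which is arguably the more natural order given how the result is phrased.
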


One proof of Theorem \ref{thm:opp.dr} relies on applying a time change to the introductory result on hitting times of the Brownian motion. The relevant time change is the one that appears in Lamperti's relation, presented below (see e.g. \cite{RY13} p.452) :

\begin{propA}\label{prop:lamperti}[Lamperti's relation]
Let \((\rho(u))_{u\geq 0}\) be a drifted Brownian motion with drift \(\mu\in\R\). For \(u \geq 0\), define
\[
T(u)=\int_0^u \exp(2\rho(u))dv.
\]
Then there exists a Bessel process \((X(t))_{t\geq 0}\) with index \(\mu\), starting from \(1\), such that for \(u\geq 0\),
\[
e^{\rho(u)}(u)=X(T(u)).
\]
\end{propA}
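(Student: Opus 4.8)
The plan is to prove Lamperti's relation directly by constructing the Bessel process $X$ from the drifted Brownian motion $\rho$ via the stated time change, and then verifying that the resulting process solves the SDE satisfied by a Bessel process of index $\mu$. First I would set $Y(u) = e^{\rho(u)}$ and apply It\^o's formula: since $\rho(u) = \rho(0) + B(u) + \mu u$ for a standard Brownian motion $B$ (with $\rho(0)=0$, so $Y(0)=1$), we get
\[
dY(u) = Y(u)\, dB(u) + \left(\mu + \tfrac12\right) Y(u)\, du.
\]
Next I would introduce the time change $T(u) = \int_0^u Y(v)^2\, dv = \int_0^u e^{2\rho(v)}\, dv$, which is strictly increasing and continuous, hence admits a continuous inverse $A = T^{-1}$ defined on $[0, T(\infty))$; note $T'(u) = Y(u)^2$, so $A'(t) = 1/Y(A(t))^2$. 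Then I would define $X(t) = Y(A(t))$ for $t \in [0, T(\infty))$, so that $X(T(u)) = Y(u) = e^{\rho(u)}$ as desired, and $X(0) = Y(0) = 1$.

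The key step is to show that $X$ is a Bessel process of index $\mu$, i.e. that it solves $dX(t) = d\beta(t) + \frac{2\mu+1}{2 X(t)}\, dt$ for some standard Brownian motion $\beta$. Substituting the time change into the SDE for $Y$: from $dY(u) = Y(u)\,dB(u) + (\mu+\tfrac12) Y(u)\,du$ and $X(t) = Y(A(t))$, the chain rule for time-changed It\^o processes gives
\[
dX(t) = Y(A(t))\, dB(A(t)) \cdot \text{(rescaled)} + \left(\mu+\tfrac12\right) Y(A(t))\, A'(t)\, dt.
\]
More carefully: the martingale part $\int_0^{A(t)} Y(v)\, dB(v)$, when viewed as a process in $t$, has quadratic variation $\int_0^{A(t)} Y(v)^2\, dv = T(A(t)) = t$, so by the Dambis--Dubins--Schwarz theorem it equals $\beta(t)$ for a standard Brownian motion $\beta$. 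The drift part becomes $(\mu+\tfrac12) Y(A(t))\, A'(t)\, dt = (\mu+\tfrac12) \frac{Y(A(t))}{Y(A(t))^2}\, dt = \frac{2\mu+1}{2\, X(t)}\, dt$. Hence $dX(t) = d\beta(t) + \frac{2\mu+1}{2X(t)}\, dt$, which is exactly the defining SDE of a Bessel process of index $\mu$ started from $1$.

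The main obstacle I anticipate is handling the behavior near the endpoint $T(\infty)$ and the possibility that $X$ hits $0$: one must check whether $T(\infty) = \infty$ or is finite (this depends on the sign of $\mu$, by recurrence/transience of drifted Brownian motion), and in the latter case argue that $X(t) \to 0$ as $t \to T(\infty)$ and that extending $X$ appropriately (or noting $0$ is reached continuously) still yields a genuine Bessel process on all of $[0,\infty)$ — invoking that a Bessel process of index $\mu$ may need to be restarted from $0$ via its own SDE when $\mu < 0$. A secondary technical point is justifying the Dambis--Dubins--Schwarz representation together with the time-change formula $\int_0^{A(t)} f(v)\, dv = \int_0^t f(A(s)) A'(s)\, ds$, which requires that $T$ (equivalently $A$) be absolutely continuous with the stated derivative; this is immediate here since the integrand $e^{2\rho}$ is continuous and strictly positive. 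Once these integrability and boundary issues are dispatched, the identity $e^{\rho(u)} = X(T(u))$ follows by construction.
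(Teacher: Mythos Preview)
Your proof is correct and follows the standard approach to Lamperti's relation: apply It\^o's formula to $Y=e^\rho$, time-change via $T$, and identify the Bessel SDE through the Dambis--Dubins--Schwarz theorem. This is precisely the argument found in the reference the paper cites (Revuz--Yor, p.~452).

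Note, however, that the paper does \emph{not} give its own proof of this proposition: it is stated as a known result with a citation, and is used only as a tool (the time change in the proof of Theorem~\ref{thm:opp.dr} and its multi-dimensional analogue). So there is no ``paper's proof'' to compare against here. Your write-up is a faithful reconstruction of the classical argument, and your remarks on the endpoint $T(\infty)$ and the possible absorption at $0$ for $\mu<0$ are exactly the technical caveats one expects; they are handled in Revuz--Yor by the general theory of Bessel processes and do not require new ideas.
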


Let us sketch the proof of Theorem \ref{thm:opp.dr} using this time change. We use the same notations as in the introduction : fix \(\theta>0\), \(\eta>0\), and let \(X(t)=\theta+ B(t) +\eta t\) where \(B\) is a standard Brownian motion. Let \(U(t)=\int_0^t \frac{1}{X(s)^2}ds\), and denote \(T=U^{-1}\). Note that \(T\) is the analogue of the time change featured in Lamperti's relation, where \(X\) plays the role of a Bessel process with index \(-\frac{1}{2}\) (with an added drift \(\eta\)).

If \(\rho(u)=\log(X(T(u)))\), then \(e^{\rho(u)}=X(T(u))\), and \(\rho\) has the distribution described in Theorem \ref{thm:opp.dr}, \textit{i.e.} a Brownian motion with drift \(-\frac{1}{2}\), and an extra term when \(\eta>0\). Moreover, when \(u\to\infty\), we have \(T(u)\to T^0\), where \(T^0\) is the first hitting time of \(0\) by \(X\). By Proposition \ref{prop-1d-bmhit}, conditionally on \(T^0\), \(X\) has the distribution of a \(3\)-dimensional Bessel bridge, \textit{i.e.} a Bessel bridge with index \(\frac{1}{2}\), and \(\rho\) has the distribution described in Theorem \ref{thm:opp.dr} (ii).

\begin{rem}
In \cite{MY01}, the opposite drift theorem is stated in a different form, where \(\eta=0\) and the drift \(\mu\) can be different from \(\frac{1}{2}\). Its proof still relies on applying Lamperti's relation, but this time to a result concerning hitting times of Bessel processes with any index \(-\mu\) (see \cite{L18} and \cite{PY81}).
\end{rem}

The aim of this article is now to obtain a multi-dimensional version of Theorem \ref{thm:opp.dr}, by applying the time change from Lamperti's relation to Theorem \ref{thm:SDE.X}, which gives a generalization of the introductory result to a multi-dimensional Brownian motion with interacting drifts.

\subsection{Brownian motions with interacting drifts and the random \(\beta\) potential}

Let \(\G=(V,E)\) be a finite, connected, and non-oriented graph, endowed with conductances \((W_e)_{e\in E}\in(\R_+^*)^E\). For \(i,j\in V\), we denote by \(W_{i,j}=W_{\{i,j\}}\) if \(\{i,j\}\in E\), and \(W_{i,j}=0\) otherwise. Note that it is possible to have \(W_{i,i}>0\). For \(\beta\in\R^V\), we define \(H_\beta=2\beta-W\), where \(W\) is the graph adjacency matrix \(W=(W_{i,j})_{i,j\in V}\), and \(\beta\) denotes here abusively the diagonal matrix with diagonal coefficients \((\beta_i)_{i\in V}\); in particular, \(H_\beta\) is a \(V\times V\) matrix.

\begin{propA}[Theorem 4 in \cite{SabTarZen}, Theorem 2.2 in \cite{Let19}]
\label{prop:nuwtheta}
For all \(\theta\in(\R_+^*)^V\) and \(\eta\in(\R_+)^V\), the measure \(\nu_V^{W,\theta,\eta}\) defined by
\[\nu_V^{W,\theta,\eta}(d\beta)=\ind_{H_\beta>0}\l(\frac{2}{\pi}\r)^{|V|/2}\exp\l(-\frac{1}{2}\langle\theta,H_\beta\theta\rangle-\frac{1}{2}\langle\eta,(H_\beta)^{-1}\eta\rangle+\langle\eta,\theta\rangle\r)\frac{\prod_{i\in V}\theta_i}{\sqrt{|H_\beta|}}d\beta\]
is a probability distribution. Moreover, for all \(i\in V\), the random variable \(\frac{1}{2\beta_i-W_{i,i}}\) has Inverse Gaussian distribution with parameter \((\frac{\theta_i}{\eta_i+\sum_{j\neq i}W_{i,j}\theta_j},\theta_i^2)\).
\end{propA}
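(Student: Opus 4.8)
The assertion contains two claims: that $\nu_V^{W,\theta,\eta}$ is a probability measure, and that each $\tfrac{1}{2\beta_i-W_{i,i}}$ is Inverse Gaussian. My plan is to reduce both to a single computation — integrating out one coordinate $\beta_k$ at a time — and then to iterate it in two ways. Write $F_\G^{\theta,\eta}(\beta):=\ind_{H_\beta>0}\exp\big(-\tfrac12\langle\theta,H_\beta\theta\rangle-\tfrac12\langle\eta,H_\beta^{-1}\eta\rangle\big)|H_\beta|^{-1/2}$, so that $\nu_V^{W,\theta,\eta}(d\beta)=(2/\pi)^{|V|/2}e^{\langle\eta,\theta\rangle}\big(\prod_i\theta_i\big)F_\G^{\theta,\eta}(\beta)\,d\beta$. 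The first step is a \emph{reduction rule}: fixing a vertex $k$ and integrating out $\beta_k$ turns $F_\G^{\theta,\eta}$ into $F_{\G\setminus\{k\}}^{\theta',\eta^{(k)}}$ — where $\G\setminus\{k\}$ carries the restricted conductances, $\theta'=\theta|_{V\setminus\{k\}}$, and $\eta$ is updated to $\eta^{(k)}_j=\eta_j+\theta_k W_{j,k}$ for $j\neq k$ — up to an explicit deterministic prefactor.

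To prove this rule I would write $H_\beta$ in block form with the $k$-th row and column last,
\[
H_\beta=\begin{pmatrix} H'_{\beta'} & -v\\ -v^{\mathsf{T}} & 2\beta_k-W_{k,k}\end{pmatrix},\qquad v=(W_{j,k})_{j\neq k},
\]
where $H'_{\beta'}=2\beta'-W'$ is the matrix attached to $\G\setminus\{k\}$. The Schur complement identities give $H_\beta>0 \iff H'_{\beta'}>0$ and $\delta:=(2\beta_k-W_{k,k})-\langle v,(H'_{\beta'})^{-1}v\rangle>0$, together with $|H_\beta|=|H'_{\beta'}|\,\delta$, the decomposition $\langle\theta,H_\beta\theta\rangle=\langle\theta',H'_{\beta'}\theta'\rangle-2\theta_k\langle v,\theta'\rangle+(2\beta_k-W_{k,k})\theta_k^2$, and $\langle\eta,H_\beta^{-1}\eta\rangle=\langle\eta',(H'_{\beta'})^{-1}\eta'\rangle+\delta^{-1}\big(\eta_k+\langle v,(H'_{\beta'})^{-1}\eta'\rangle\big)^2$. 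Fixing $\beta'$ with $H'_{\beta'}>0$ and substituting $s=\delta$, the $\beta_k$-integral collapses to $\tfrac12 e^{-c\theta_k^2/2}\int_0^\infty e^{-\theta_k^2 s/2-r^2/(2s)}s^{-1/2}\,ds$ with $c=\langle v,(H'_{\beta'})^{-1}v\rangle$ and $r=\eta_k+\langle v,(H'_{\beta'})^{-1}\eta'\rangle$; since $H'_{\beta'}$ is positive definite with nonpositive off-diagonal entries it is a Stieltjes matrix, so $(H'_{\beta'})^{-1}\geq 0$ entrywise and $r\geq 0$. The classical integral $\int_0^\infty e^{-ps/2-q/(2s)}s^{-1/2}\,ds=\sqrt{2\pi/p}\;e^{-\sqrt{pq}}$ (for $p>0$, $q\geq 0$), followed by completing the square, then yields the reduction rule with prefactor $\sqrt{\pi/2}\,\theta_k^{-1}\exp\big(-\theta_k\eta_k+\theta_k\langle v,\theta'\rangle\big)$.

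With the rule in hand, for the normalization I would iterate it over an arbitrary ordering $k_1,\dots,k_n$ of $V$ (legitimate by Tonelli, the integrand being nonnegative), ending at the empty graph whose density is $1$. When the successive prefactors are multiplied, the quadratic-in-$\theta$ contributions coming from the $\exp(\theta_{k_m}\langle v,\theta'\rangle)$ factors cancel exactly against those hidden in the updates $\eta\mapsto\eta^{(k_m)}$, leaving
\[
\int_{\{H_\beta>0\}}e^{-\frac12\langle\theta,H_\beta\theta\rangle-\frac12\langle\eta,H_\beta^{-1}\eta\rangle}\,|H_\beta|^{-1/2}\,d\beta=\Big(\tfrac\pi2\Big)^{|V|/2}\frac{e^{-\langle\eta,\theta\rangle}}{\prod_{i\in V}\theta_i},
\]
so $\nu_V^{W,\theta,\eta}$ has total mass $1$ (this works for any finite weighted graph, connected or not, which is all the iteration needs). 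For the marginal I would instead stop after eliminating the $n-1$ vertices of $V\setminus\{i\}$ with $\beta_i$ held fixed; since all prefactors and all intermediate $\eta$'s are deterministic, what remains is the one-vertex density at $i$ with $\eta_i$ replaced by $\eta_i+\sum_{j\neq i}W_{i,j}\theta_j$. Hence the law of $\beta_i$ has density proportional to $\ind_{2\beta_i-W_{i,i}>0}\,(2\beta_i-W_{i,i})^{-1/2}\exp\big(-\tfrac12(2\beta_i-W_{i,i})\theta_i^2-(\eta_i+\sum_{j\neq i}W_{i,j}\theta_j)^2/(2(2\beta_i-W_{i,i}))\big)$, and setting $\gamma=(2\beta_i-W_{i,i})^{-1}$ the density of $\gamma$ is proportional to $\gamma^{-3/2}\exp(-\theta_i^2/(2\gamma)-\tilde\eta_i^2\gamma/2)$ with $\tilde\eta_i=\eta_i+\sum_{j\neq i}W_{i,j}\theta_j$ — which, being a probability density, is exactly that of $\operatorname{IG}(\theta_i/\tilde\eta_i,\theta_i^2)$.

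The calculations themselves are elementary; the main obstacle is the bookkeeping in the reduction rule and its iteration: keeping the Schur-complement region $\{H_\beta>0\}=\{H'_{\beta'}>0\}\cap\{\delta>0\}$ correct so that Tonelli applies coordinate by coordinate, checking that the update $\eta\mapsto\eta^{(k)}$ is consistent with the stated prefactor, and verifying that the accumulated prefactors telescope to precisely $(\pi/2)^{|V|/2}e^{-\langle\eta,\theta\rangle}/\prod_i\theta_i$. The one structural input beyond calculus is the Stieltjes/$M$-matrix positivity of $(H'_{\beta'})^{-1}$, which is what makes the sign in $e^{-\sqrt{pq}}=e^{-\theta_k r}$ come out right. (Alternatively one could run the induction directly on the graph as in \cite{SabTarZen} and \cite{Let19}, or represent $|H_\beta|^{-1/2}$ and $e^{-\frac12\langle\eta,H_\beta^{-1}\eta\rangle}$ through auxiliary Gaussian — or supersymmetric — integrals.)
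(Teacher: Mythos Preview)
The paper does not prove this proposition: it is quoted as a known result from \cite{SabTarZen} and \cite{Let19}, and no argument is supplied. Your proposal is correct and is essentially the approach of those references --- the one-vertex reduction via the Schur complement of $H_\beta$, the Gaussian/Bessel integral $\int_0^\infty e^{-ps/2-q/(2s)}s^{-1/2}\,ds=\sqrt{2\pi/p}\,e^{-\sqrt{pq}}$, and iteration to get both the normalization and the one-dimensional marginal --- so there is nothing to contrast. The one point worth flagging is that you correctly isolate the structural input (entrywise nonnegativity of $(H'_{\beta'})^{-1}$ via the $M$-matrix property) needed to guarantee $r\ge 0$ and hence the sign $e^{-\theta_k r}$; without this the telescoping to $e^{-\langle\eta,\theta\rangle}$ would fail.
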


For \(t\in(\R_+)^V\), we also denote by \(K_t\) the matrix \(K_t=\Id-tW\), where \(t\) still denotes the diagonal matrix with coefficients \((t_i)_{i\in V}\). Note that if \(t\in(\R_+^*)^V\), we have \(K_t=t H_{\frac{1}{2t}}\), where \(\frac{1}{2t}=\l(\frac{1}{2t_i}\r)_{i\in V}\). Finally, for \(t\in(\R_+)^V\) and \(T\in(\o{\R_+})^V\), we define the vector \(t\wedge T=(t_i\wedge T_i)_{i\in V}\). 

\begin{theoA}\label{thm:SDE.X}[Lemma 1 and Theorem 1 in \cite{SabZenEDS}]
Let \(\theta\in(\R_+^*)^V\) and \(\eta\in(\R_+)^V\) be fixed, and let \((B_i(t))_{i\in V,t\geq 0}\) be a standard \(|V|\)-dimensional Brownian motion.
\begin{itemize}
\item[(i)] The following stochastic differential equation (SDE) has a unique pathwise solution :
\begin{equation}\label{SDE:X}
X_i(t)=\theta_i+\int_0^t\ind_{s<T^0_i}dB_i(s) -\int_0^t\ind_{s<T^0_i}((W\psi)(s)+\eta)_i ds \tag{\(E_V^{W,\theta,\eta}(X)\)}
\end{equation}
for \(i\in V\) and \(t\geq 0\), where for \(i\in V\), \(T^0_i\) is the first hitting time of \(0\) by \(X_i\), and for \(t\geq 0\),
\[
\psi(t)=K_{t\wedge T^0}^{-1}(X(t)+(t\wedge T^0)\eta).
\]

\item[(ii)] If \((X_i)_{i\in V}\) is solution of \eqref{SDE:X}, the vector \(\l(\frac{1}{2 T^0_i}\r)_{i\in V}\) has distribution \(\nu_V^{W,\theta,\eta}\), and conditionally on \((T^0_i)_{i\in V}\), the paths \((X_i(t))_{0\leq t\leq T^0_i}\) are independent \(3\)-dimensional Bessel bridges.
\end{itemize}
\end{theoA}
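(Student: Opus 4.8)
The plan is to prove (i) by standard localization together with a repulsion estimate, and to deduce (ii) from the one-dimensional Proposition~\ref{prop-1d-bmhit} by a Girsanov change of measure.

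\textbf{Part (i): well-posedness.} I would solve \eqref{SDE:X} by localization. Given a continuous candidate path one reads off the hitting times $T^0_i$, and on $[0,T^0_i)$ the $i$-th drift is $-\big((W\psi)_i+\eta_i\big)$ with $\psi=K_{t\wedge T^0}^{-1}\big(X+(t\wedge T^0)\eta\big)$, a locally Lipschitz and locally bounded function of $(t,X)$ on the open set $\{\det K_{t\wedge T^0}\neq0\}$. Iterating standard SDE theory between successive hitting times produces a pathwise-unique solution up to the first time $\zeta$ at which $\det K_{t\wedge T^0}$ reaches $0$, so the only real issue is that $\zeta=\infty$, i.e.\ that each coordinate dies before the time change can make $K$ singular. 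The mechanism is repulsion: while $\det K_{t\wedge T^0}>0$ one has $\rho\big((t\wedge T^0)W\big)<1$, whence $K_{t\wedge T^0}^{-1}=\sum_{k\geq0}\big((t\wedge T^0)W\big)^k$ has nonnegative entries, so $X_i(t)>0$, $\psi_i(t)\geq0$ and $(W\psi)_i\geq0$ on $[0,T^0_i)$; and as $t\wedge T^0$ approaches $\{\det K=0\}$ the entries of $K_{t\wedge T^0}^{-1}$ blow up, sending the negative drift of every still-alive coordinate to $-\infty$ and driving it to $0$. A Gronwall/comparison argument turns this into $T^0_i<\zeta$. A useful by-product of the same algebra, which I reuse below, is that $\psi$ is a nonnegative local martingale started at $\theta$, with $K_{t\wedge T^0}\,d\psi(t)$ equal to the vector of Brownian increments restricted to the still-alive coordinates.

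\textbf{Part (ii): hitting-time law and conditional structure.} I would pick as reference the law $\mathbb{Q}$ under which the coordinates are independent Brownian motions started at $\theta_i$ and absorbed at $0$: by Proposition~\ref{prop-1d-bmhit} with $\eta=0$, applied coordinatewise, under $\mathbb{Q}$ the vector $(T^0_i)$ has density $\prod_i\frac{\theta_i}{\sqrt{2\pi t_i^3}}e^{-\theta_i^2/(2t_i)}$ and, conditionally on $(T^0_i)$, the coordinates are independent $3$-dimensional Bessel bridges $\theta_i\to0$ on $[0,T^0_i]$. Thus $\mathbb{Q}$ already carries the conditional structure claimed for $\P$, and it is enough to show that $\frac{d\P}{d\mathbb{Q}}$ is $\sigma\big((T^0_i)_{i\in V}\big)$-measurable and to compute it. By Girsanov — made rigorous by localizing at $\tau_\epsilon=\inf\{t:\det K_{t\wedge T^0}<\epsilon\}$ and letting $\epsilon\downarrow0$, which is legitimate because $T^0_i<\zeta$ $\P$-a.s.\ by part (i) — this derivative equals
\[
\exp\Big(-\sum_i\int_0^{T^0_i}\big((W\psi)_i+\eta_i\big)\,dB_i-\tfrac12\sum_i\int_0^{T^0_i}\big((W\psi)_i+\eta_i\big)^2\,ds\Big).
\]
I would then evaluate this by applying Itô's formula to the quadratic form $\big\langle X+(t\wedge T^0)\eta,\ K_{t\wedge T^0}^{-1}\big(X+(t\wedge T^0)\eta\big)\big\rangle$, using the martingale identity for $d\psi$ from part (i), the relation $\partial_{t_i}K_{t\wedge T^0}^{-1}=K_{t\wedge T^0}^{-1}E_{ii}WK_{t\wedge T^0}^{-1}$ (on active indices), and Jacobi's formula $\frac{d}{dt}\log\det K_{t\wedge T^0}=-\sum_{i:\,t<T^0_i}\big(K_{t\wedge T^0}^{-1}W\big)_{ii}$, to put the stochastic integral in closed form. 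Letting $t\to\infty$ — so that $t\wedge T^0=T^0$, all coordinates are dead, and $K_{T^0}=T^0H_\beta$ with $\beta_i=\frac1{2T^0_i}$ — collapses the derivative into a quantity depending only on $(T^0_i)$, of the form $|K_{T^0}|^{-1/2}\exp\!\big(-\tfrac12\langle\eta,H_\beta^{-1}\eta\rangle+\langle\eta,\theta\rangle+\tfrac12\langle\theta,W\theta\rangle\big)$ times a coordinatewise product of explicit single-variable factors. Since this is $\sigma\big((T^0_i)\big)$-measurable, $\P$ inherits the conditional law of $\mathbb{Q}$ — independent $3$-dimensional Bessel bridges — and multiplying by the $\mathbb{Q}$-density of $(T^0_i)$, then using $|K_{T^0}|=|H_\beta|\prod_iT^0_i$ and the substitution $\beta_i=\frac1{2T^0_i}$, returns exactly the density $\nu_V^{W,\theta,\eta}$ of $\beta$.

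\textbf{Main obstacle.} The technical crux is the Itô computation in part (ii). Because the time change $t\mapsto t\wedge T^0_i$ runs at a different rate on each coordinate, $K_{t\wedge T^0}$ has a set of active rows/columns that shrinks as coordinates die, and differentiating the quadratic form produces, besides the $K_{t\wedge T^0}^{-1}$-martingale part, numerous cross-terms mixing $dX_i$, the $\eta$-drift and the derivatives $K^{-1}E_{ii}WK^{-1}$; one must check that they telescope precisely against the Girsanov quadratic term $\tfrac12\sum_i\big((W\psi)_i+\eta_i\big)^2$ and against $\tfrac12\,d\log\det K_{t\wedge T^0}$, so that only the announced exponential and the per-coordinate factors survive. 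A secondary difficulty is the global well-posedness in part (i) — converting the heuristic ``the drift of an alive coordinate blows up toward $-\infty$ near $\{\det K_{t\wedge T^0}=0\}$'' into a quantitative estimate yielding $T^0_i<\zeta$, which is also exactly what justifies the $\epsilon\downarrow0$ passage in Girsanov. Finally, I note that the determinant bookkeeping can be traded for the strong Markov property of Theorem~\ref{thm:Markov.X}: restarting the whole family at the first hitting time $\min_iT^0_i$ on the graph with that vertex deleted and with $(\theta,\eta)$ updated so as to absorb the removed vertex reduces $|V|$ by one, so (ii) follows by induction with base case $|V|=1$ being Proposition~\ref{prop-1d-bmhit}; this route replaces the matrix calculus by careful bookkeeping of the parameter updates at each death.
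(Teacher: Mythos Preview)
This theorem is not proved in the present paper: it is stated with the attribution ``Lemma~1 and Theorem~1 in \cite{SabZenEDS}'' and used as a black box throughout. There is therefore no proof here to compare against.

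That said, your outline is essentially the strategy of the original reference, and the paper itself confirms this indirectly: its second proof of Theorem~\ref{thm:SDE.rho} (Section~4.2) opens with ``This proof follows the same structure as that of Theorem~\ref{thm:SDE.X}'', and then carries out exactly the Girsanov scheme you describe --- take a product reference measure, write the Radon--Nikodym derivative as an exponential martingale, apply It\^o's formula to a quadratic form in $K^{-1}$, use Jacobi's formula for $\partial_u\log\det K$, and integrate against $\nu_V^{W,\theta,\eta}$ to collapse the mixing. Your identification of the main obstacle (the cross-terms from differentiating $K_{t\wedge T^0}^{-1}$ on a shrinking active index set must telescope against the Girsanov quadratic term and the log-determinant) is accurate; the computation in Section~4.2 shows the precise cancellations in the time-changed setting and mirrors what one does for \eqref{SDE:X}. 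Your alternative inductive route via the strong Markov property (Theorem~\ref{thm:Markov.X}) is also viable, but note the slight circularity: in \cite{SabZenEDS} that Markov property is derived \emph{from} (ii), so it cannot be used to prove (ii) without an independent argument.
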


To obtain an analogue of the opposite drift theorem as in Section \ref{sec:opp.dr.lamp}, we want to apply the time change from Lamperti's relation to solutions \((X_i)_{i\in V}\) of \eqref{SDE:X}. A problem will arise in the interaction term, since the time change will be different on every coordinate of \(X\). To solve this, we will use a form of strong Markov property verified by solutions of \eqref{SDE:X}, which is a consequence of Theorem \ref{thm:SDE.X}(ii). This Markov property will be true with respect to multi-stopping times, defined as follows. 

\begin{defi}\label{defi:multistop}
Let \(X\) be a multi-dimensional càdlag process indexed by \(V\). A random vector \(T=(T_i)_{i\in V} \in \overline{\R_+}^V\) will be called a multi-stopping time with respect to \(X\), if : for all \(t\in\R_+^V\), the event \(\cap_{i\in V}\{T_i\leq t_i\}\) is \(\mathcal{F}^X_t\)-measurable, where 
\[
\mathcal{F}^X_t=\sigma \big( (X_i(s))_{0\leq s\leq t_i}, i\in V \big).
\]
In this case, we denote by \(\mathcal{F}^X_T\) the \(\sigma\)-algebra of events anterior to \(T\), \textit{i.e.}
\[
\mathcal{F}^X_T=\l\{ A\in\mathcal{F}^X_\infty, \forall t\in\R_+^V, A\cap\{T_i\leq t_i\}\in\mathcal{F}^X_{t} \r\}
\]
\end{defi}

Let us now formulate the strong Markov property for solutions of \eqref{SDE:X}.

\begin{theoA}\label{thm:Markov.X}[Theorem 2 (iv) in \cite{SabZenEDS}]
Let \(X\) be a solution of \eqref{SDE:X}, and \(T=(T_i)_{i\in V}\) be a multi-stopping time with respect to \(X\).

Define the shifted process \(Y\) by
\[
Y_i(t)=X_i(T_i+t)
\]
for \(i\in V\) and \(t\geq 0\). Moreover, we denote
\[
\tilde{W}^{(T)}=W\l(K_{T\wedge T^0}\r)^{-1},\tilde{\eta}^{(T)}=\eta+\tilde{W}^{(T)}\big((T\wedge T^0)\eta\big), \text{ and } X(T)=(X_i(T_i))_{i\in V}.
\]

Then on the event \(\cap_{i\in V}\{T_i<\infty\}\), conditionally on \(T\) and \(\mathcal{F}^X_T\), the process \(Y\) has the same distribution as the solution of \(\big(E_V^{\tilde{W}^{(T)},X(T),\tilde{\eta}^{(T)}}(X)\big)\).
\end{theoA}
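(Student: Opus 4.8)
The plan is to establish the statement first for a deterministic multi-time and then to reach a general multi-stopping time by approximation from above. One reduces at once to the case where $T_i<T^0_i$ for every $i$: on the $\mathcal F^X_T$-measurable event $\{i\in V:T_i\ge T^0_i\}=J$ the coordinates $i\in J$ have $X_i(T_i)=0$, so $Y_i\equiv 0$, and they drop out of the dynamics of the other coordinates, since $(T\wedge T^0)_i=T^0_i$ makes the $i$-th row of $K_{T\wedge T^0}$ equal to $e_i^{\top}$ and hence $\psi_i\equiv 0$; thus the claim is trivial on those coordinates. At a deterministic time $t$ the analogous splitting $V=V_1(t)\sqcup V_2(t)$, with $V_1(t)=\{i:T^0_i\le t_i\}$, is $\mathcal F^X_t$-measurable.

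\textbf{Step 1: deterministic times.} Fix $t\in\R_+^V$ and condition on $\mathcal F^X_t$. By Theorem~\ref{thm:SDE.X}(ii), the law of $X$ is obtained by sampling $\bigl(\frac{1}{2T^0_i}\bigr)_i\sim\nu_V^{W,\theta,\eta}$ and then letting the $X_i$ be independent $3$-dimensional Bessel bridges from $\theta_i$ to $0$ on $[0,T^0_i]$. Conditioning further on $T^0$, the Markov property of the $3$-dimensional Bessel bridge together with the conditional independence across coordinates shows that, given $(\mathcal F^X_t,T^0)$, the shifted paths $\bigl(X_i(t_i+\cdot)\bigr)_i$ are independent $3$-dimensional Bessel bridges from $X_i(t_i)$ to $0$ on $[0,(T^0_i-t_i)^+]$. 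Since the solution of the target equation $\bigl(E_V^{\tilde W^{(t)},X(t),\tilde\eta^{(t)}}(X)\bigr)$ is likewise, conditionally on its hitting times, a family of independent $3$-dimensional Bessel bridges started from $X_i(t_i)$ (again by Theorem~\ref{thm:SDE.X}(ii)), it remains only to identify the conditional law given $\mathcal F^X_t$ of the vector $\bigl((T^0_i-t_i)^+\bigr)_i$ with the push-forward of $\nu_V^{\tilde W^{(t)},X(t),\tilde\eta^{(t)}}$ under $\beta\mapsto\bigl(\frac1{2\beta_i}\bigr)_i$.

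\textbf{Step 2: the hitting-time identity (the main obstacle).} This is a Bayes computation whose algebraic half is the core of the proof. For $j\in V_2(t)$, conditioning on $\mathcal F^X_t$ means conditioning, given $T^0$, on the restriction to $[0,t_j]$ of a $3$-dimensional Bessel bridge from $\theta_j$ to $0$ on $[0,T^0_j]$; the Radon--Nikodym density of that restriction with respect to the ($T^0_j$-free) law of the $3$-dimensional Bessel process from $\theta_j$ on $[0,t_j]$ is, as a function of $T^0_j$ and of the endpoint $x_j:=X_j(t_j)$, proportional to $(T^0_j)^{3/2}(T^0_j-t_j)^{-3/2}\exp\bigl(-\frac{x_j^2}{2(T^0_j-t_j)}+\frac{\theta_j^2}{2T^0_j}\bigr)$, a ratio of the space-time harmonic functions $u\mapsto u^{-3/2}e^{-y^2/(2u)}$ governing the $3$-dimensional Bessel bridge. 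On the other hand, since $K_{T^0}=\diag(T^0_i)\,H_\beta$ with $\beta=\bigl(\frac1{2T^0_i}\bigr)_i$, the push-forward of $\nu_V^{W,\theta,\eta}$ has density in $T^0$ proportional to $\ind_{\{H_\beta>0\}}\bigl(\prod_i\theta_i\bigr)\bigl(\prod_i(T^0_i)^{3/2}\bigr)^{-1}|K_{T^0}|^{-1/2}\exp\bigl(-\frac12\sum_i\frac{\theta_i^2}{T^0_i}-\frac12\langle\eta,K_{T^0}^{-1}\diag(T^0_i)\,\eta\rangle\bigr)$, the $T^0$-free contributions of $\langle\theta,H_\beta\theta\rangle$, of $\langle\eta,\theta\rangle$, and of the Jacobian having been absorbed. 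Multiplying by the likelihoods over $j\in V_2(t)$ cancels the factors $(T^0_j)^{3/2}$ and $e^{\theta_j^2/(2T^0_j)}$, and with $v_j:=T^0_j-t_j$ one is left, up to a $v$-free constant, with $\ind_{\{H_\beta>0\}}\bigl(\prod_j v_j^{-3/2}\bigr)|K_{T^0}|^{-1/2}\exp\bigl(-\frac12\sum_j\frac{x_j^2}{v_j}-\frac12\langle\eta,K_{T^0}^{-1}\diag(T^0_i)\,\eta\rangle\bigr)$, where $\diag(T^0_i)$ now carries fixed entries on $V_1(t)$ and entries $t_j+v_j$ on $V_2(t)$. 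The remaining point is purely linear-algebraic: writing $K_{T^0}=K_{t\wedge T^0}-\diag(v_j)\,W$, a Schur-complement / matrix-determinant identity gives $|K_{T^0}|=|K_{t\wedge T^0}|\cdot\bigl|\Id_{V_2(t)}-\bigl(\diag(v_j)\tilde W^{(t)}\bigr)_{V_2(t)}\bigr|$, and the companion Woodbury expansion of $K_{T^0}^{-1}$ rewrites the quadratic form; the two together produce precisely the substitutions $W\rightsquigarrow\tilde W^{(t)}=WK_{t\wedge T^0}^{-1}$, $\eta\rightsquigarrow\tilde\eta^{(t)}=\eta+\tilde W^{(t)}\bigl((t\wedge T^0)\eta\bigr)$ and $\theta\rightsquigarrow X(t)$, identifying the above density with the $\nu_{V_2(t)}^{\tilde W^{(t)},X(t),\tilde\eta^{(t)}}$-density. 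The heaviest bookkeeping is in matching $\langle\eta,K_{T^0}^{-1}\diag(T^0_i)\,\eta\rangle$ with the quadratic form in $\tilde\eta^{(t)}$ modulo a $v$-free constant. (Alternatively, one may invoke the known restriction property of the family $\nu_V^{W,\theta,\eta}$ under conditioning on a subset of coordinates, keeping only the Bessel-bridge likelihood bookkeeping that replaces $\theta$ by $X(t)$ and shifts time by $t$.)

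\textbf{Step 3: from deterministic to multi-stopping times.} For a general multi-stopping time $T$ set $T^{(n)}_i=2^{-n}\lceil 2^nT_i\rceil$ (with $\infty\mapsto\infty$); then $T^{(n)}_i\downarrow T_i$, each $T^{(n)}$ takes countably many values, and $\bigcap_i\{T^{(n)}_i\le t_i\}=\bigcap_i\{T_i\le 2^{-n}\lfloor 2^n t_i\rfloor\}\in\mathcal F^X_t$, so $T^{(n)}$ is again a multi-stopping time. Applying Step~1 on each atom $\{T^{(n)}=s\}\in\mathcal F^X_s$ shows that, conditionally on $\mathcal F^X_{T^{(n)}}$, the process $\bigl(X_i(T^{(n)}_i+\cdot)\bigr)_i$ solves $\bigl(E_V^{\tilde W^{(T^{(n)})},X(T^{(n)}),\tilde\eta^{(T^{(n)})}}(X)\bigr)$. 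Letting $n\to\infty$, the path-continuity of $X$ gives $X_i(T^{(n)}_i+\cdot)\to Y_i$ locally uniformly and $X(T^{(n)})\to X(T)$, $T^{(n)}\wedge T^0\to T\wedge T^0$, hence $\tilde W^{(T^{(n)})}\to\tilde W^{(T)}$ and $\tilde\eta^{(T^{(n)})}\to\tilde\eta^{(T)}$ (using invertibility of $K_{T\wedge T^0}$ on the relevant event); and the law of a solution of $\bigl(E_V^{W,\theta,\eta}(X)\bigr)$ depends weakly continuously on $(W,\theta,\eta)$, which is transparent from the representation of Theorem~\ref{thm:SDE.X}(ii) since the density of $\nu_V^{W,\theta,\eta}$ is continuous in the parameters (and integrates to $1$ for all admissible parameters) while a $3$-dimensional Bessel bridge depends continuously on its endpoint and length. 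Since $\bigcap_n\mathcal F^X_{T^{(n)}}=\mathcal F^X_{T+}$ and the limiting conditional law is a measurable function of the $\mathcal F^X_T$-measurable triple $\bigl(X(T),\tilde W^{(T)},\tilde\eta^{(T)}\bigr)$, it descends to $\mathcal F^X_T$, which is the claim. The secondary delicate points here are the multiparameter right-continuity $\mathcal F^X_{T+}=\mathcal F^X_T$ and the weak parameter-continuity just used to pass to the limit.
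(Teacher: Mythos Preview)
This statement is not proved in the present paper: it is quoted as Theorem~2(iv) of \cite{SabZenEDS} and used as a black box (note that it sits in the \texttt{theoA} environment, which the paper reserves for results imported from the literature, alongside Propositions~\ref{prop-1d-bmhit}--\ref{prop:nuwtheta} and Theorems~\ref{thm:opp.dr}--\ref{thm:SDE.X}). There is therefore no in-paper proof to compare your attempt against.

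For what it is worth, your outline is the natural one and is essentially how the result is obtained in the cited reference: pull the Markov property back through the mixture representation of Theorem~\ref{thm:SDE.X}(ii), combining the coordinatewise Markov property of the $3$-dimensional Bessel bridge with a Bayes computation that identifies the conditional law of the residual hitting times $\bigl((T^0_i-t_i)^+\bigr)_i$ given $\mathcal F^X_t$ as the push-forward of $\nu_V^{\tilde W^{(t)},X(t),\tilde\eta^{(t)}}$, and then pass from deterministic multi-times to general multi-stopping times by dyadic approximation from above. The Schur-complement/Woodbury manipulation you sketch in Step~2 is precisely the restriction/conditioning identity for the family $\nu_V^{W,\theta,\eta}$; invoking that identity directly, as you note parenthetically, is the cleaner route and is the way the argument is organised in \cite{SabZenEDS}. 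The points you flag in Step~3 are genuine but secondary, and your observation that the limiting conditional law depends only on the $\mathcal F^X_T$-measurable triple $\bigl(X(T),\tilde W^{(T)},\tilde\eta^{(T)}\bigr)$ already circumvents the multi-parameter right-continuity issue.
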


\subsection{Main results : A multi-dimensional version of the opposite drift theorem}

Let \((X_i)_{i\in V}\) be a solution of \eqref{SDE:X}. As in the usual case of Lamperti's relation, let us introduce the functional that will define the time change. For \(i\in V\) and \(t\geq 0\), we set 
\[
U_i(t)= \int_0^t \frac{\ind_{s<T^0_i}}{X_i(s)^2}ds.
\]
We first have to show that this time change goes to infinity.

\begin{lem}\label{lem:lim.u}
Let \(i\in V\) be fixed. Then
\[
\lim_{t\to T_i^0} U(t)=+\infty,
\]
so that \(U_i:[0,T_i^0[\,\to[0,+\infty[\) is a bijection.
\end{lem}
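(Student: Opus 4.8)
The plan is to first reduce the statement to a question about a single $3$-dimensional Bessel bridge, and then to a question about a $3$-dimensional Bessel process started from $0$. Fix $i\in V$. On the interval $[0,T^0_i)$ the path $X_i$ is continuous and stays strictly positive (it starts at $\theta_i>0$ and $T^0_i$ is its first zero), so $s\mapsto X_i(s)^{-2}$ is continuous, positive and bounded on every $[0,t]$ with $t<T^0_i$. Hence $U_i$ is $C^1$ and strictly increasing on $[0,T^0_i)$ with $U_i(0)=0$, and therefore a continuous increasing bijection from $[0,T^0_i)$ onto $[0,L_i)$, where $L_i:=\lim_{t\to T^0_i}U_i(t)\in(0,+\infty]$. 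The lemma is then equivalent to the almost sure statement $L_i=\int_0^{T^0_i}X_i(s)^{-2}\,ds=+\infty$.

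Next I would condition on the vector $(T^0_j)_{j\in V}$ and apply Theorem~\ref{thm:SDE.X}(ii): conditionally on these hitting times, $(X_i(t))_{0\le t\le T^0_i}$ is a $3$-dimensional Bessel bridge from $\theta_i$ to $0$ on $[0,T^0_i]$, and $T^0_i>0$ almost surely. It thus suffices to prove that, for every $a>0$ and every $T>0$, a $3$-dimensional Bessel bridge $(r(t))_{0\le t\le T}$ from $a$ to $0$ satisfies $\int_0^T r(t)^{-2}\,dt=+\infty$ almost surely. By the time-reversal invariance of Bessel bridges, $(r(T-u))_{0\le u\le T}$ is a $3$-dimensional Bessel bridge from $0$ to $a$, and on any interval $[0,\delta]$ with $\delta<T$ its law is absolutely continuous with respect to that of a $3$-dimensional Bessel process $R$ started from $0$. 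Since
\[
\int_0^T r(t)^{-2}\,dt=\int_0^T r(T-u)^{-2}\,du\ \ge\ \int_0^\delta r(T-u)^{-2}\,du,
\]
the claim reduces to showing that $\int_0^\delta R_t^{-2}\,dt=+\infty$ almost surely for a $3$-dimensional Bessel process $R$ from $0$.

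For this last point I would use Brownian scaling together with Blumenthal's $0$–$1$ law. Scaling shows that $I_\varepsilon:=\int_0^\varepsilon R_t^{-2}\,dt$ has the same law for every $\varepsilon>0$. Moreover, for $0<\varepsilon<\varepsilon'$ one has $I_{\varepsilon'}=I_\varepsilon+\int_\varepsilon^{\varepsilon'}R_t^{-2}\,dt$ with the last integral almost surely finite, so the event $\{I_\varepsilon=+\infty\}$ is, up to null sets, independent of $\varepsilon$; hence $\{I_1=+\infty\}$ belongs (up to null sets) to the germ $\sigma$-field at $0$ and $\P(I_1=+\infty)\in\{0,1\}$. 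If this probability were $0$, then almost surely $I_\varepsilon<\infty$ for all $\varepsilon$ and $I_\varepsilon\downarrow 0$ as $\varepsilon\downarrow 0$; since $I_\varepsilon$ has the same law as $I_1$ for every $\varepsilon>0$, letting $\varepsilon\to 0$ would force $I_1$ to equal $0$ almost surely, which is absurd since $R_t\in(0,\infty)$ for all $t\in(0,1]$ almost surely and hence $I_1>0$ almost surely. Therefore $\P(I_1=+\infty)=1$, and unwinding the reductions gives $L_i=+\infty$ almost surely; the bijectivity of $U_i:[0,T^0_i)\to[0,+\infty)$ then follows from continuity and strict monotonicity.

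The continuity and monotonicity of $U_i$ and the time-reversal and bridge/process absolute-continuity facts for Bessel processes are classical, so the only genuinely probabilistic ingredient is the divergence of $\int_0^\delta R_t^{-2}\,dt$ for $\mathrm{BES}(3)$ from $0$. I expect the main point to be not that estimate but the bookkeeping of the first reduction: one must be careful that the per-coordinate Bessel-bridge description of Theorem~\ref{thm:SDE.X}(ii), obtained after conditioning on all hitting times $(T^0_j)_{j\in V}$, is exactly what licenses replacing $X_i$ by a single Bessel bridge in the integral defining $L_i$.
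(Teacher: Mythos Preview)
Your argument is correct, but it differs from the paper's in the second half. Both of you make the same first reduction via Theorem~\ref{thm:SDE.X}(ii) to a single $3$-dimensional Bessel bridge. From there, however, the paper applies It\^o's formula to $\log X$, obtaining the identity
\[
\frac{X(t)}{T^0-t}=\frac{\theta}{T^0}\,e^{\hat B(U(t))+\frac12 U(t)},
\]
and then uses the representation $X(t)=(T^0-t)\,Y\!\bigl(t/(T^0(T^0-t))\bigr)$ with $Y$ a $\mathrm{BES}(3)$ process to see that the left-hand side diverges, forcing $U(t)\to\infty$ since $u\mapsto \hat B(u)+\tfrac12 u$ does not explode. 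Your route instead time-reverses the bridge, passes by absolute continuity to $\mathrm{BES}(3)$ started from $0$, and then uses scaling ($I_\varepsilon\stackrel{d}{=}I_1$) together with Blumenthal's $0$--$1$ law to rule out $I_1<\infty$. Both are valid; your argument is softer and would generalise to other indices without touching the bridge SDE, whereas the paper's computation has the advantage of producing the very identity \eqref{tc.Bb} that is reused verbatim in the proof of Theorem~\ref{thm:md.opp.dr}. If you adopt your proof in isolation you will need to rederive that identity separately when you reach Section~3.2.
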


For all \(i\in V\), we then define \(T_i=(U_i|_{[0,T_i^0[})^{-1}\). Therefore, for all \(u\geq 0\), \(T_i(u)<T^0_i\) and \(\lim_{u\to\infty}T_i(u)=T_i^0\).

We can now show that the time-changed solution \(\big( X_i\circ T_i \big)_{i\in V}\) can be written as
\[
X_i(T_i(u))=e^{\rho_i(u)}
\]
for \(u\geq 0\), where \((\rho_i)_{i\in V}\) is solution of a new sytem of stochastic differential equations :

\begin{thm}\label{thm:SDE.rho}
\begin{itemize}
\item[(i)]
For \(i\in V\) and \(u\geq 0\), let us define \(\rho_i(u)=\log \big( X_i(T_i(u)) \big)\). Then \((\rho,T)\) is solution of the following system of SDEs :
\begin{equation}\label{SDE:rho}
\l\{\begin{aligned}
	\rho_i(v) &= \log(\theta_i) + \tilde{B}_i(v) + \int_0^v \l(-\frac{1}{2}-e^{\rho_i(u)}\l(\tilde{W}^{(u)}(e^{\rho(u)}+T(u)\eta)+\eta\r)_i\r)du,\\
	T_i(v) &= \int_0^v e^{2\rho_i(u)}du,
\end{aligned} \r. \tag{\(E_V^{W,\theta,\eta}(\rho)\)}
\end{equation}
for \(i\in V\) and \(v\geq 0\), where \((\tilde{B}_i)_{i\in V}\) is a \(|V|\)-dimensional standard Brownian motion, \(e^{\rho(u)}\) denotes the vector \((e^{\rho_i(u)})_{i\in V}\), and 
\[
\tilde{W}^{(u)}=W K_{T(u)}^{-1}=W \big( \Id-T(u)W \big)^{-1}.
\]
\item[(ii)] The equation \eqref{SDE:rho} admits a unique pathwise solution \(u\mapsto \big( \rho(u),T(u) \big)\), which is a.s. well defined on all of \(\R_+\).
\end{itemize}
\end{thm}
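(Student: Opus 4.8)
The plan is to prove (i) by applying Itô's formula to the coordinate-wise time-changed process $\rho_i=\log(X_i\circ T_i)$, using the strong Markov property of Theorem \ref{thm:Markov.X} to deal with the interaction term, and to prove (ii) by transporting well-posedness between $(E_V^{W,\theta,\eta}(\rho))$ and $(E_V^{W,\theta,\eta}(X))$ through this time change.

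For (i): since $U_i'(t)=\ind_{t<T_i^0}/X_i(t)^2$ on $[0,T_i^0[$, the inverse function theorem shows that $T_i$ is $C^1$ with $T_i'(u)=X_i(T_i(u))^2=e^{2\rho_i(u)}$, which is the second equation of $(E_V^{W,\theta,\eta}(\rho))$. As $X_i$ is a continuous semimartingale and $u\mapsto T_i(u)$ is a continuous, strictly increasing, absolutely continuous time change taking values in $[0,T_i^0[$, the process $X_i\circ T_i$ is again a continuous semimartingale; on $[0,T_i(u)]$ the indicator in $(E_V^{W,\theta,\eta}(X))$ equals $1$, so the martingale part of $X_i\circ T_i$ is $B_i\circ T_i$ and its bounded-variation part is $-\int_0^\cdot\big((W\psi)(T_i(v))+\eta\big)_i\,e^{2\rho_i(v)}\,dv$. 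If $\tilde B_i$ denotes the martingale part of $\rho_i=\log(X_i\circ T_i)$, then $d\langle\tilde B_i\rangle_u=e^{-2\rho_i(u)}\,d\langle X_i\circ T_i\rangle_u=e^{-2\rho_i(u)}\,dT_i(u)=du$, while $\langle\tilde B_i,\tilde B_j\rangle=0$ for $i\neq j$ because the $B_i$ are independent and the time change acts on each coordinate separately; by Lévy's characterisation $\tilde B=(\tilde B_i)_{i\in V}$ is a standard $|V|$-dimensional Brownian motion, and Itô's formula gives $d\rho_i(u)=d\tilde B_i(u)-\tfrac12\,du-e^{\rho_i(u)}\big((W\psi)(T_i(u))+\eta\big)_i\,du$.

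It remains to replace $\big((W\psi)(T_i(u))\big)_i$ by $\big(\tilde W^{(u)}(e^{\rho(u)}+T(u)\eta)\big)_i$, and this is the crux, where the fact that the time change runs at different speeds on different coordinates must be dealt with. The vector $T(u)=(T_i(u))_{i\in V}$ is a multi-stopping time for $X$, since $\{T_i(u)\le t_i\}=\{U_i(t_i)\ge u\}\in\mathcal F^X_t$, and $T(u)\wedge T^0=T(u)$ because $T_i(u)<T_i^0$ for all $i$. By Theorem \ref{thm:Markov.X}, conditionally on $\mathcal F^X_{T(u)}$ the shifted process $Y_i(t)=X_i(T_i(u)+t)$ solves $\big(E_V^{\tilde W^{(u)},e^{\rho(u)},\tilde\eta^{(u)}}(X)\big)$ with $\tilde W^{(u)}=WK_{T(u)}^{-1}$ and $\tilde\eta^{(u)}=\eta+\tilde W^{(u)}(T(u)\eta)$, and moreover $\rho_i(u+h)$ equals the logarithm of the Lamperti time change of $Y_i$. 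Applying the computation of the previous paragraph to this fresh equation and evaluating at $h=0$, where its analogue of $\psi$ equals $Y(0)=e^{\rho(u)}$, shows that the drift of $\rho_i$ at $u$ equals $-\tfrac12-e^{\rho_i(u)}\big(\tilde W^{(u)}e^{\rho(u)}+\tilde\eta^{(u)}\big)_i=-\tfrac12-e^{\rho_i(u)}\big(\tilde W^{(u)}(e^{\rho(u)}+T(u)\eta)+\eta\big)_i$; comparing with the previous display yields the required identity and hence $(E_V^{W,\theta,\eta}(\rho))$. The main obstacle is precisely this step: the naive change of variable $s=T_i(v)$ in $(E_V^{W,\theta,\eta}(X))$ evaluates $\psi$ at a time not synchronised with the other coordinates, and it is the Markov property at the multi-stopping time $T(u)$ that resynchronises everything, turning $W$ into $\tilde W^{(u)}=WK_{T(u)}^{-1}$.

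For (ii): part (i) already provides a solution, so it remains to see it is global and unique. Globality follows because $T_i(u)<T_i^0<\infty$ for every finite $u$ (Lemma \ref{lem:lim.u}), so $\rho_i(u)=\log X_i(T_i(u))$ is finite, and $K_{T(u)}=\Id-T(u)W$ is invertible: for $u>0$ one writes $K_{T(u)}=T(u)\,H_{1/(2T(u))}$ with $H_{1/(2T(u))}=\diag(1/T_i(u))-W\ge\diag(1/T_i^0)-W=H_\beta>0$, where $\beta=(1/(2T_i^0))_{i\in V}$ satisfies $H_\beta>0$ a.s.\ by Theorem \ref{thm:SDE.X}(ii) and Proposition \ref{prop:nuwtheta}; hence all the coefficients of $(E_V^{W,\theta,\eta}(\rho))$ stay finite and the solution extends to all of $\R_+$. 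For uniqueness, the coefficients of $(E_V^{W,\theta,\eta}(\rho))$ are locally Lipschitz in $(\rho,T)$ on the open set $\{\det(\Id-TW)\neq0\}$, which gives pathwise uniqueness up to the exit time of this set; since the solution of (i) is defined on all of $\R_+$ and stays in this set, any other solution coincides with it. Alternatively, one reverses the time change: along any solution $H_{1/(2T(u))}$ remains a positive definite Stieltjes matrix, so $K_{T(u)}^{-1}\ge0$ entrywise, the interaction terms are nonnegative, the drift of $\rho_i$ is therefore $\le-\tfrac12$, and $T_i^0:=\int_0^\infty e^{2\rho_i(v)}\,dv<\infty$ a.s.; setting $X_i(t)=e^{\rho_i(T_i^{-1}(t))}$ on $[0,T_i^0[$ and $X_i\equiv0$ afterwards gives, by running the computation of (i) backwards, a solution of $(E_V^{W,\theta,\eta}(X))$, which is pathwise unique by Theorem \ref{thm:SDE.X}(i), whence so is $(\rho,T)$.
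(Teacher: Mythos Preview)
Your argument for (ii) is essentially the paper's: existence from (i), and pathwise uniqueness from the fact that the coefficients are locally Lipschitz on $\R^V\times\{t:K_t>0\}$; your remark that $K_{T(u)}=T(u)H_{1/(2T(u))}$ with $H_{1/(2T(u))}>H_\beta>0$ is a clean way to see non-explosion.

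The gap is in (i). Your first paragraph applies It\^o's formula to $\log X_i$ and time-changes each coordinate separately to obtain
\[
\rho_i(u)=\log\theta_i+\tilde B_i(u)-\tfrac12 u-\int_0^u e^{\rho_i(v)}\big((W\psi)(T_i(v))+\eta\big)_i\,dv,
\qquad \tilde B_i(u)=\int_0^{T_i(u)}\frac{dB_i(s)}{X_i(s)}.
\]
This identity is correct pathwise, but it is \emph{not} a semimartingale decomposition in the filtration $\mathcal F^{(\rho,T)}_u=\mathcal F^X_{T(u)}$: the integrand $(W\psi)(T_i(v))$ involves $X_j(T_i(v))$ for $j\neq i$, and when $T_j(v)<T_i(v)$ this is not $\mathcal F^X_{T(v)}$-measurable. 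For the same reason $\tilde B_i$ is not $\mathcal F^{(\rho,T)}$-adapted: reconstructing $B_i(s)$ from the SDE requires all $X_j(s)$ at the common time $s\le T_i(u)$, not only up to $T_j(u)$. Hence L\'evy's criterion cannot be invoked in a single filtration to conclude that $(\tilde B_i)_{i\in V}$ is a standard $|V|$-dimensional Brownian motion. Your ``comparing with the previous display'' step then implicitly equates $\big((W\psi)(T_i(u))\big)_i$ with $\big(\tilde W^{(u)}(e^{\rho(u)}+T(u)\eta)\big)_i$; but the first depends on $\big(X_j(T_i(u))\big)_j$ while the second depends on $\big(X_j(T_j(u))\big)_j$, and these are different random variables, so there is no uniqueness-of-decomposition argument that makes them agree.

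The paper's first proof uses the same Markov ingredient but in a way that avoids this trap: it never writes a semimartingale decomposition of $\rho$ in $\mathcal F^{(\rho,T)}$. Instead it fixes $u$, applies Theorem~\ref{thm:Markov.X} at the multi-stopping time $T(u)$, runs It\^o's formula on the \emph{post-}$T(u)$ process over $[u,v]$, and computes
\[
\lim_{v\to u^+}\frac{\E\big[f(\rho(v),T(v))\,\big|\,\mathcal F^{(\rho,T)}_u\big]-f(\rho(u),T(u))}{v-u}.
\]
In this limit the offending cross-terms $X_j(T_i(w))$ (which appear only inside $\int_u^v\cdots\,dw$) converge to $X_j(T_j(u))=e^{\rho_j(u)}$, and one reads off the generator of \eqref{SDE:rho}. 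This identifies $(\rho,T)$ as a solution of the associated martingale problem, and then the local-Lipschitz uniqueness (your part (ii)) upgrades this to a genuine solution of \eqref{SDE:rho} driven by some Brownian motion $\tilde B$. Your sketch ``evaluating at $h=0$'' is morally this computation, but as written it produces a fresh Brownian motion for each $u$ rather than a single driving $\tilde B$; the limit-of-generators formulation is what makes the argument rigorous. (The paper also gives an independent second proof via Girsanov's theorem and the mixing representation of Theorem~\ref{thm:SDE.X}(ii), which bypasses the filtration issue altogether.)
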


As a consequence of Theorems \ref{thm:SDE.X}(ii) and \ref{thm:SDE.rho}, we can  relate the solutions of \eqref{SDE:rho} to time-changed Bessel bridges and the distribution \(\nu_V^{W,\theta,\eta}\). This is stated in Theorem \ref{thm:md.opp.dr} below, which is the multi-dimensional version of Theorem \ref{thm:opp.dr}.

\begin{thm}\label{thm:md.opp.dr}
Let \(\tilde{B}\) be a \(|V|\)-dimensional standard Brownian motion, and let \((\rho,T)\) be solution of \eqref{SDE:rho}.
\begin{itemize}
\item[(i)]
For all \(i\in V\), we have
\[
T_i(u)=\int_0^u e^{2\rho_i(v)}dv \xrightarrow[u\to\infty]{a.s.}T_i^0,
\]
where \(\l(\frac{1}{2T_i^0}\r)_{i\in V}\) is distributed according to \(\nu_V^{W,\theta,\eta}\). \item[(ii)]
Conditionally on \(T^0\), there exists a standard \(|V|\)-dimensional Brownian motion \(\hat{B}\) such that for \(i\in V\) and \(u\geq 0\),
\[
\rho_i(u) =\log(\theta_i)+\hat{B}_i(u)+\frac{1}{2}u+\log\l(\frac{T^0_i-T_i(u)}{T^0_i}\r).
\]
In particular, the processes \((\rho_i,T_i)_{i\in V}\) are independent conditionally on \(T^0\).
\end{itemize}
\end{thm}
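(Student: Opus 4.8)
The plan is to transport the two statements of Theorem \ref{thm:SDE.X} through the Lamperti-type time change $T_i = (U_i|_{[0,T_i^0[})^{-1}$, using Theorem \ref{thm:SDE.rho} to identify the time-changed process. For part (i), start from a solution $(X_i)_{i\in V}$ of \eqref{SDE:X} built from a Brownian motion $B$, and let $(\rho,T)$ be the associated time-changed pair as in Theorem \ref{thm:SDE.rho}(i); by the pathwise uniqueness asserted in Theorem \ref{thm:SDE.rho}(ii), it suffices to prove the claim for this particular coupling. Since by construction $T_i(u) = U_i^{-1}(u)$ and $\lim_{u\to\infty} T_i(u) = T_i^0$ (stated right after Lemma \ref{lem:lim.u}), the a.s. convergence $\int_0^u e^{2\rho_i(v)}\,dv = T_i(u) \to T_i^0$ is immediate. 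Then invoke Theorem \ref{thm:SDE.X}(ii): the vector $\big(\tfrac{1}{2T_i^0}\big)_{i\in V}$ has law $\nu_V^{W,\theta,\eta}$. This gives (i).

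For part (ii), condition on the whole vector $T^0 = (T_i^0)_{i\in V}$. By Theorem \ref{thm:SDE.X}(ii), conditionally on $T^0$ the paths $\big(X_i(t)\big)_{0\le t\le T_i^0}$ are independent $3$-dimensional Bessel bridges from $\theta_i$ to $0$ on $[0,T_i^0]$. Now I treat each coordinate separately, which is legitimate precisely because of this conditional independence. For a fixed $i$, apply the one-dimensional Lamperti computation (the time change $T_i = (U_i)^{-1}$, $U_i(t) = \int_0^t X_i(s)^{-2}\,ds$) to a single $3$-dimensional Bessel bridge; this is exactly the content behind the classical proof sketch of Theorem \ref{thm:opp.dr}(ii) given in Section \ref{sec:opp.dr.lamp}, now carried out bridge-by-bridge rather than for the unconditioned process. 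The output is that $\rho_i(u) = \log X_i(T_i(u))$ satisfies
\[
\rho_i(u) = \log(\theta_i) + \hat B_i(u) + \tfrac12 u + \log\!\l(\frac{T_i^0 - T_i(u)}{T_i^0}\r),
\]
for some Brownian motion $\hat B_i$; and since the $|V|$ bridges are conditionally independent, the $\hat B_i$ can be chosen independent, so $\hat B = (\hat B_i)_{i\in V}$ is a standard $|V|$-dimensional Brownian motion conditionally on $T^0$. The stated conditional independence of $(\rho_i,T_i)_{i\in V}$ then follows because each pair is a deterministic functional of $\big(X_i(t)\big)_{0\le t\le T_i^0}$ (via $\rho_i = \log X_i\circ T_i$ and $T_i = U_i^{-1}$), and these path-segments are conditionally independent.

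I expect the main obstacle to be making rigorous the per-coordinate Lamperti computation for a Bessel bridge with a fixed endpoint time, i.e. verifying that $u\mapsto \log X_i(T_i(u))$ really is a Brownian motion with drift $+\tfrac12$ plus the explicit logarithmic correction term $\log\big((T_i^0 - T_i(u))/T_i^0\big)$, and in particular controlling the behavior as $u\to\infty$ (equivalently $t\to T_i^0$), where the Bessel bridge hits $0$ and the correction term diverges to $-\infty$. The cleanest route is to combine Itô's formula for $\log X_i$ with the time-change formula $dT_i(u) = X_i(T_i(u))^2\,du = e^{2\rho_i(u)}\,du$, using the known SDE for the $3$-dimensional Bessel bridge, to get that $\hat B_i(u) := \rho_i(u) - \log\theta_i - \tfrac12 u - \log\big((T_i^0-T_i(u))/T_i^0\big)$ is a continuous local martingale with quadratic variation $u$, hence Brownian by Lévy's characterization; one must also check this identity is consistent with the drift structure of \eqref{SDE:rho} after conditioning, so that no contradiction arises between the two descriptions. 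An alternative, avoiding Bessel-bridge SDEs, is to deduce (ii) directly from Theorem \ref{thm:SDE.rho} together with the strong Markov property of \eqref{SDE:X} along multi-stopping times (Theorem \ref{thm:Markov.X}) and a reversal/absolute-continuity argument; but the bridge-by-bridge approach via Theorem \ref{thm:SDE.X}(ii) is the most transparent and I would present that.
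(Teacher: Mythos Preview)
Your proposal is correct and follows essentially the same route as the paper: couple $(\rho,T)$ to a solution $X$ of \eqref{SDE:X} via Theorem \ref{thm:SDE.rho}, read off part (i) from Lemma \ref{lem:lim.u} together with Theorem \ref{thm:SDE.X}(ii), and then condition on $T^0$ to reduce (ii) to the per-coordinate Lamperti computation on independent $3$-dimensional Bessel bridges. The It\^o computation you anticipate as the main obstacle is precisely the one carried out in the proof of Lemma \ref{lem:lim.u} (equation \eqref{tc.Bb}), so no additional work is needed there.
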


\subsection{Two open questions}

\subsubsection*{The Matsumoto-Yor property}

The Gamma and Inverse Gaussian distributions, as well as the distribution of the inverse of a Gamma or Inverse Gaussian variable, all fall into the family of the so-called generalized Inverse Gaussian distributions.

A random variable is said to have generalized Inverse Gaussian distribution with parameter \((q, a,b)\) where \(q\in \mathbb{R}\) and \(a,b>0\), and denoted \(\operatorname{GIG}(q, a,b)\) if it has the following density:
\begin{equation}
\label{equation-gig-density}
\left( \frac{a}{b} \right)^{q/2} \frac{1}{2K_q(\sqrt{ab})} t^{q-1} e^{-\frac{1}{2}(at+b/t)}\mathds{1}_{t>0}.
\end{equation}
In particular, we have the following special cases (where zero parameter is understood as in ~\cite{norman1994johnson}):
\[
\operatorname{IG}\l(\frac{\theta}{\eta}, \theta^2\r)=\operatorname{GIG}\l(-\frac{1}{2}, \frac{\eta^2}{2}, \frac{\theta^2}{2}\r),\ \operatorname{Gamma}\l(\frac{1}{2} ,\theta^2\r)=GIG\l(-\frac{1}{2}, 0, \frac{\theta^2}{2}\r)
\]
and
\[
X\sim \operatorname{GIG}\l(-\frac{1}{2},\frac{\eta^2}{2},\frac{\theta^2}{2}\r) \Leftrightarrow 1/X\sim \operatorname{GIG}\l(\frac{1}{2}, \frac{\theta^2}{2}, \frac{\eta^2}{2}\r).
\]
Define the last visit of 0 of our drifted Brownian motion to be $T^1=\sup\{t\ge 0:\ B_t+\theta-\eta t=0\}$.
By a time inversion argument, i.e. setting
\[\widetilde{B}_t=\begin{cases}-tB_{1/t} & t>0 \\ 0 & t=0\end{cases},\]
the Gaussian process \(\widetilde{B}\) is also a Brownian motion and we deduce that \(\frac{1}{T^1}\) is the first visit time to 0 of \(\widetilde{B}_t+\eta-\theta t\), hence \(T^1\) is \(\operatorname{GIG}(\frac{1}{2}, \frac{\theta^2}{2},\frac{\eta^2}{2})\) distributed, moreover, by Strong Markov property of Brownian motion, we deduce that \(T^1-T^0\) is \(\operatorname{Gamma}(\frac{1}{2}, \theta^2)\) distributed, and it is independent of \(T^0\).

More generally, we have the following identity in distribution, which is known as the Matsumoto-Yor property~\cite{Wesolowski2007,matsumoto2003interpretation}:
\begin{prop}
	\label{my-prop}
	Let \(T=(T^0,T^1)\) be a random vector, then there is equivalence between the following statements:
	\begin{itemize}
		\item[(i)] \(\left( \frac{1}{T^0},T^1-T^0  \right)\sim \operatorname{GIG}(\frac{1}{2},\frac{\theta^2}{2},\frac{\eta^2}{2})\otimes \operatorname{Gamma}(\frac{1}{2}, \theta^2)\)
		\item[(ii)] \(\left( \frac{1}{T^0}-\frac{1}{T^1}, T^1 \right)\sim \operatorname{Gamma}(\frac{1}{2}, \eta^2)\otimes \operatorname{GIG}(\frac{1}{2}, \frac{\eta^2}{2},\frac{\theta^2}{2})\).
	\end{itemize}
\end{prop}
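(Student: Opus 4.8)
The plan is to deduce the equivalence from the single concrete example that underlies it --- the pair of first and last hitting times of $0$ of the drifted Brownian motion --- using the fact that each of (i) and (ii) pins down the law of $T=(T^0,T^1)$ completely. Introduce the two maps
\[
\phi_1(x,y)=\Bigl(\tfrac1x,\ y-x\Bigr),\qquad \phi_2(x,y)=\Bigl(\tfrac1x-\tfrac1y,\ y\Bigr).
\]
A direct check shows each $\phi_j$ is a smooth bijection from the open set $\{0<x<y\}$ onto $(0,\infty)^2$, with inverses $\phi_1^{-1}(s,t)=(1/s,\ 1/s+t)$ and $\phi_2^{-1}(s,t)=(t/(1+st),\ t)$. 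Since the $\operatorname{GIG}$ and $\operatorname{Gamma}$ laws are carried by $(0,\infty)$, statement (i) says exactly that $\phi_1(T)$ has the law $\mu_1:=\operatorname{GIG}(\tfrac12,\tfrac{\theta^2}2,\tfrac{\eta^2}2)\otimes\operatorname{Gamma}(\tfrac12,\theta^2)$ on $(0,\infty)^2$; equivalently, $T\in\{0<T^0<T^1\}$ a.s.\ and $\mathrm{Law}(T)=(\phi_1^{-1})_*\mu_1$. Likewise (ii) is equivalent to $\mathrm{Law}(T)=(\phi_2^{-1})_*\mu_2$ with $\mu_2:=\operatorname{Gamma}(\tfrac12,\eta^2)\otimes\operatorname{GIG}(\tfrac12,\tfrac{\eta^2}2,\tfrac{\theta^2}2)$. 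In particular each of (i), (ii) determines $\mathrm{Law}(T)$ uniquely.

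Next I would record that the particular pair $T^{\mathrm{BM}}=(T^0,T^1)$ of first and last hitting times of $0$ of $t\mapsto B_t+\theta-\eta t$ satisfies \emph{both} (i) and (ii). Statement (i) for $T^{\mathrm{BM}}$ is exactly the content recalled just before the Proposition: by Proposition~\ref{prop-1d-bmhit}, $T^0\sim\operatorname{IG}(\theta/\eta,\theta^2)=\operatorname{GIG}(-\tfrac12,\tfrac{\eta^2}2,\tfrac{\theta^2}2)$, hence $1/T^0\sim\operatorname{GIG}(\tfrac12,\tfrac{\theta^2}2,\tfrac{\eta^2}2)$ by the reflection relation, while the strong Markov property of $B$ at $T^0$ gives $T^1-T^0\sim\operatorname{Gamma}(\tfrac12,\theta^2)$ independently of $T^0$. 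For (ii), apply the time inversion $\widetilde B_t=-tB_{1/t}$: one computes $\widetilde B_t+\eta-\theta t=-t\bigl(B_{1/t}+\theta-\eta/t\bigr)$, so the zero set on $(0,\infty)$ of $s\mapsto\widetilde B_s+\eta-\theta s$ is the image under $t\mapsto1/t$ of the zero set of $t\mapsto B_t+\theta-\eta t$; hence $1/T^1$ and $1/T^0$ are respectively the first and last hitting times of $0$ of the drifted Brownian motion $\widetilde B_t+\eta-\theta t$, which has the \emph{same} form with $\theta$ and $\eta$ interchanged. Applying statement (i), already proved, to this new pair yields $(T^1,\ 1/T^0-1/T^1)\sim\operatorname{GIG}(\tfrac12,\tfrac{\eta^2}2,\tfrac{\theta^2}2)\otimes\operatorname{Gamma}(\tfrac12,\eta^2)$, i.e.\ (ii) for $T^{\mathrm{BM}}$.

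The equivalence then follows formally: if $T$ satisfies (i), then by the first paragraph $\mathrm{Law}(T)=(\phi_1^{-1})_*\mu_1=\mathrm{Law}(T^{\mathrm{BM}})$, and since $T^{\mathrm{BM}}$ satisfies (ii) --- a property of the law alone --- so does $T$; the converse is identical with $\phi_2$ in place of $\phi_1$. The points needing care are exactly the two routine verifications flagged above --- that $\phi_1,\phi_2$ are genuine bijections of $\{0<x<y\}$ onto $(0,\infty)^2$ (so (i) and (ii) really do force $0<T^0<T^1$ a.s.\ and fix the law) and the time-inversion identity matching first and last hitting times --- and I expect the bookkeeping of the $\operatorname{GIG}$/$\operatorname{Gamma}$ parameters (the factors of $2$ in $\tfrac{\theta^2}2,\tfrac{\eta^2}2,\theta^2,\eta^2$) to be the fiddliest part. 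A self-contained alternative bypasses the Brownian model entirely and proves $(\phi_1^{-1})_*\mu_1=(\phi_2^{-1})_*\mu_2$ directly, by pushing the density of $\mu_2$ along $\phi_1\circ\phi_2^{-1}\colon(s,t)\mapsto\bigl(s+\tfrac1t,\ \tfrac{st^2}{1+st}\bigr)$, computing the Jacobian, and using $K_{1/2}(z)=\sqrt{\pi/(2z)}\,e^{-z}$ to reduce the $\operatorname{GIG}$ normalizations to elementary functions; the exponential factors recombine exactly, and this calculation is the analytic heart of the classical Matsumoto--Yor / Letac--Wesolowski property.
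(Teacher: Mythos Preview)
Your argument is correct. Note, however, that the paper does not actually supply a proof of this proposition: it is quoted as a known result (the Matsumoto--Yor property, with references to Wesolowski and to Matsumoto--Yor), and the paragraph immediately preceding it merely sketches, for the concrete Brownian pair \((T^0,T^1)\), the two facts you also rely on --- the strong Markov property giving \(T^1-T^0\sim\operatorname{Gamma}(\tfrac12,\theta^2)\) independent of \(T^0\), and the time inversion \(\widetilde B_t=-tB_{1/t}\) turning \(1/T^1\) into the first hitting time of the drifted Brownian motion with \(\theta\) and \(\eta\) swapped.

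Your contribution over that sketch is the clean logical wrapper: observing that each of (i) and (ii) determines the law of \(T\) completely via the bijections \(\phi_1,\phi_2:\{0<x<y\}\to(0,\infty)^2\), so that exhibiting one random vector (the Brownian pair) satisfying both settles the equivalence. This is exactly the right way to upgrade the paper's heuristic into a proof, and your time-inversion computation --- applying the already-established (i) to \((\widetilde T^0,\widetilde T^1)=(1/T^1,1/T^0)\) with parameters interchanged to read off (ii) --- is the efficient route. The alternative you mention at the end, pushing densities through \(\phi_1\circ\phi_2^{-1}\) and using the explicit form \(K_{1/2}(z)=\sqrt{\pi/(2z)}\,e^{-z}\), is the classical Letac--Weso\l{}owski computation and would also work; the Brownian route you chose is more in keeping with the paper's probabilistic viewpoint.
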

It is natural to ask whether such an identity holds true in the case of the distribution in Proposition~\ref{prop:nuwtheta}, and whether one is able to define the process in the time inversion.

\subsubsection*{An opposite-drift theorem for other values of the drift}

The multi-dimensional opposite-drift theorem proved in this paper is limited to the case of drifts $-\frac{1}{2}$ and $\frac{1}{2}$, since it results from Theorem \ref{thm:SDE.X}, which concerns Bessel processes with index $-\frac{1}{2}$ and $\frac{1}{2}$ (\textit{i.e.} Brownian motion and $3$-dimensional Bessel bridges). We could try and obtain a similar result for other values of the drift. This necessitates the use of a random potential analogous to $\beta$, whose marginals would relate to the hitting times of Bessel processes with other indices, \textit{i.e.} generalized Inverse Gaussian variables.

The case of index $\frac{3}{2}$ might be solvable, thanks to recent developments by Bauerschmidt, Crawford, Helmuth and Swan in \cite{BCHS19}, and by Crawford in \cite{Cra19}. These articles concern other sigma models, in particular $\mathbb{H}^{2|4}$, which is related to random spanning forests, and could provide a generalization of the $\beta$ potential corresponding to index $\frac{3}{2}$.

\section{Time change on Bessel bridges}

\subsection{Limit of the time change : Proof of Lemma \ref{lem:lim.u}}

Let \(X=(X_i)_{i\in V}\) be a solution of \eqref{SDE:X}.  According to Theorem \ref{thm:SDE.X}, conditionally on \((T_i^0)_{i\in V}\), the trajectories \((X_i(t))_{0\leq t\leq T_i^0}\) are independent 3-dimensional Bessel bridges. As a result, in order to prove Lemma \ref{lem:lim.u}, it is enough to show the same result for a 3-dimensional Bessel bridge.

Let us then fix \(\theta>0\) and \(T^0>0\), and let \(X\) be a 3-dimensional Bessel bridge from \(\theta\) to \(0\) between \(0\) and \(T^0\). We want to show that
\[U(t)=\int_0^t\frac{ds}{X(s)^2}\xrightarrow[t\to T^0]{a.s.}+\infty.\]
We will do so by applying the time change from Lamperti's relation.

Since \(X\) is a 3-dimensional Bessel bridge, there exists a standard Brownian motion \(B\) such that
\[
dX(t)=dB(t)+\frac{1}{X(t)}dt-\frac{X(t)}{T^0-t}dt,
\]
therefore by Ito's lemma, for \(t<T^0\),
\begin{align*}
d\log(X(t)) &= \frac{dB(t)}{X(t)}+\frac{dt}{X(t)^2}-\frac{dt}{T^0-t} -\frac{1}{2}\frac{dt}{X(t)^2} \\
&= dM(t) +\frac{1}{2}dU(t) +d\log\l(T^0-t\r),
\end{align*}
where \(M(t)=\int_0^t \frac{dB(t)}{X(t)}\) is a martingale, and \(\langle M \rangle_t=U(t)\). Therefore, there exists a standard Brownian motion \(\hat{B}\) such that \(M(t)=\hat{B}(U(t))\). Finally, for \(t\geq 0\), we have
\begin{equation}\label{tc.Bb}
\log(X(t))=\log(\theta)+\hat{B}(U(t))+\frac{1}{2}U(t)+\log\l(\frac{T^0-t}{T^0}\r),
\end{equation}
\textit{i.e.}
\[
\frac{X(t)}{T^0-t}=\frac{\theta}{T^0}e^{\hat{B}(U(t))+\frac{1}{2}U(t)}.
\]

However, since \(X\) is a 3-dimensional Bessel bridge, there also exists a 3-dimensional Bessel process \(Y\) such that for \(t\geq 0\),
\[
X(t)=(T^0-t)Y\l(\frac{t}{T^0(T^0-t)}\r)
\]
(see \cite{RY13} p.467). Therefore, when \(t\to T^0\), we have a.s. \(\frac{X(t)}{T^0-t}\to+\infty\). Since \(u\mapsto\hat{B}(u)+\frac{1}{2}u\) cannot explode in finite time, we have necessarily 
\[
U(t)\xrightarrow[t\to T^0]{a.s.}+\infty.
\]

\subsection{Time change on the conditional process : Proof of Theorem \ref{thm:md.opp.dr}}

Let \((\tilde{B}_i)_{i\in V}\) be a \(|V|\)-dimensional standard Brownian motion. According to Theorem \ref{thm:SDE.rho}, there exists a \(|V|\)-dimensional standard Brownian motion \((B_i)_{i\in V}\) such that, if \((X_i)_{i\in V}\) is the solution of \eqref{SDE:X} with the Brownian motion \(B\), and if \(T_i\) is the inverse function of \(U_i:t\mapsto\int_0^t\frac{ds}{X_i(s)^2}\) for all \(i\in V\), then \((\rho, T)\) is the solution of \eqref{SDE:rho} with the Brownian motion \(\tilde{B}\), where \(\rho_i(u)=\log \big( X_i(T_i(u)) \big)\) for \(u\geq 0\). 

Therefore, according to Lemma \ref{lem:lim.u}, we have a.s. for all \(i\in V\) :
\[
\lim_{u\to +\infty}T_i(u)=T^0_i,
\]
where \(T_i^0\) is the hitting time of \(0\) by \(X_i\). Moreover, we can apply Theorem \ref{thm:SDE.X} (ii) to \(X\) : the vector \(\l(\frac{1}{2T^0_i}\r)_{i\in V}\) is distributed according to \(\nu_V^{W,\theta,\eta}\), and conditionally on \((T^0_i)_{i\in V}\), the trajectories \((X_i(t))_{0\leq t\leq T^0_i}\) are independent 3-dimensional Bessel bridges from \(\theta_i\) to \(0\) respectively. Since \(\rho_i(u)=\log\big(X_i(T_i(u))\big)\) for \(u\geq 0\), conditionally on \((T^0_i)_{i\in V}\), the processes \((\rho_i,T_i)_{i\in V}\) are independent, and their distribution is given by applying the time change from Lamperti's relation to a 3-dimensional Bessel bridge. This time-change was already realized in the proof of Lemma \ref{lem:lim.u} (see \eqref{tc.Bb}), and the result is as follows : conditionally on \((T^0_i)_{i\in V}\), for all \(i\in V\), there exists a standard Brownian motion \(\hat{B}_i\) such that for \(u\geq 0\).
\[
\l\{\begin{aligned}
\rho_i(u)&=\log(\theta_i)+\hat{B}_i(u)+\frac{1}{2}u+\log\l(\frac{T^0_i-T_i(u)}{T^0_i}\r)\\
T_i(u)&=\int_0^u e^{2\rho_i(w)}dw.
\end{aligned}\r.
\]

\section{Multi-dimensional time change : Proof of Theorem \ref{thm:SDE.rho}}

Let us first assume that Theorem \ref{thm:SDE.rho} (i) is proven, and show (ii), \textit{i.e.} that \eqref{SDE:rho} has a.s. a unique pathwise solution defined on all of \(\R_+\). Let \(\tilde{B}\) be a \(|V|\)-dimensional Brownian motion. Thanks to Theorem \ref{thm:SDE.rho} (i), we know that \eqref{SDE:rho} admits a solution that is well defined on \(\R_+\). Let us now show that this solution is necessarily unique.

Let \((\rho^*,T^*)\) be another solution of \eqref{SDE:rho} with the Brownian motion \(\tilde{B}\). Let also \(\mathcal{K}\) be a compact subset of \(\R^V\times \{t\in\R_+^V, K_t>0\}\) containing \((\log(\theta_i),0)_{i\in V}\). Then the function
\[
\l\{\begin{aligned}
\mathcal{K} & \to \R^V\times\R^V \\
(\rho,t) & \mapsto \bigg( -\frac{1}{2} - e^{\rho_i} \Big( W K_t^{-1} (e^\rho+t\eta) +\eta \Big)_i \, , \, e^{2\rho_i} \bigg)_{i\in V}
\end{aligned}\r.
\]
is bounded and Lipschitz. Therefore, up to the stopping time \(U_\mathcal{K}=\inf\{u\geq 0, (\rho(u),T(u))\notin\mathcal{K}\}\), we have \((\rho(u),T(u))=(\rho^*(u),T^*(u))\) from Theorem 2.1, p.375 of \cite{RY13}. Since this is true for all compact subset \(\mathcal{K}\) of \(\R^V\times \{t\in\R_+^V, K_t>0\}\), we have a.s. \((\rho,T)=(\rho^*,T^*)\). This concludes the proof of Theorem \ref{thm:SDE.rho} (ii).

Theorem \ref{thm:SDE.rho} (i) remains to be proven. Let \(B\) be a standard \(|V|\)-dimensional Brownian motion, and let \((X_i)_{i\in V}\) be a solution of \eqref{SDE:X}. For \(i\in V\), recall that \(T_i\) is the inverse function of
\[
U_i:\l\{ \begin{aligned}
\l[0,T^0_i\r[ & \, \to \,  \l[0,+\infty\r[\\
t & \, \mapsto \,  \int_0^t \frac{ds}{X_i(s)^2}
\end{aligned} \r.
\]
and \(\rho_i(u)=\log\big(X_i(T_i(u))\big)\) for \(u\geq 0\). 

In order to show that \((\rho,T)\) is solution of \eqref{SDE:rho}, we want to apply the same time change as in Lamperti's relation. However, in the equation \eqref{SDE:X}, the term \(\psi(t)\) represents an interaction between the coordinates \(X_i(t)\) at the same time \(t\geq 0\), which correspond to different times \(U_i(t)\) when writing \(X_i(t)=e^{\rho_i(U_i(t))}\).

We present here two different ways of overcoming this problem. The first proof relies on identifying the infinitesimal generator of the process \((\rho_i,T_i)_{i\in V}\), using the strong Markov property presented in Theorem \ref{thm:Markov.X}. The second one uses Theorem \ref{thm:SDE.X} (ii), so that we can write \(X\) as a mixture of independent Bessel bridges, to which we can apply the time change separately, and then identify the law of the annealed process using Girsanov's theorem.

\subsection{First proof of (i) : using the strong Markov property of Theorem \ref{thm:Markov.X}}

Firstly, let \(u\geq 0\) be fixed, and \(f:\R^V\times\R^V\to \R\) be a compactly-supported \(\mathcal{C}^2\) function. To identify the infinitesimal generator of \((\rho,T)\), let us compute
\[
\lim_{v\to u^+}\frac{\E[f(\rho(v),T(v))|\mathcal{F}^{(\rho,T)}_u]-f(\rho(u),T(u))}{v-u}.
\]
Note that \((T_i(u))_{i\in V}\) is a multi-stopping time in the sense of Definition \ref{defi:multistop} and that \(\mathcal{F}^{(\rho,T)}_u=\mathcal{F}^X_{T(u)}\). Let us then define
\[
\tilde{W}^{(u)}= W \big( K_{T(u)} \big)^{-1}, \; \tilde{K}^{(u)}_t = \Id-t\tilde{W}^{(u)}, \text{ and } \tilde{\eta}^{(u)}= \eta + \tilde{W}^{(u)}(T(u)\eta).
\]
Thanks to Theorem \ref{thm:Markov.X}, conditionally on \(\mathcal{F}^X_{T(u)}\), the shifted process 
\[
Y=Y^{(u)}:t\mapsto \big( X_i(T_i(u)+t) \big)_{i\in V}
\]
is solution of the following equation :
\[
\l\{\begin{aligned}
dY_i(t) &= \ind_{t\leq\hat{T}^0_i}d\hat{B}_i(t) -\ind_{t\leq\hat{T}^0_i} \l( \tilde{W}^{(u)} (\tilde{K}^{(u)}_t)^{-1} \l(Y(t)+(t\wedge \hat{T}^0)\tilde{\eta}^{(u)}\r) +\tilde{\eta}^{(u)} \r)_i dt,\\
Y_i(0) &= X_i(T_i(u)),
\end{aligned}\r.
\]
for \(i\in V\), \(t\geq 0\), where \(\hat{B}\) is a \(|V|\)-dimensional standard Brownian motion independent from \(\mathcal{F}^X_{T(u)}\), and \(\hat{T}^0_i\) is the first hitting time of \(0\) by \(Y_i\). 

Let us now fix \(v>u\), and define the interrupted process
\[
Z=Z^{(u,v)}:t\mapsto \Big( X_i \big( (T_i(u)+t)\wedge T_i(v) \big)  \Big)_{i\in V}.
\]
For all \(i\in V\) and \(t\geq 0\), we then have \(Z_i(t)=Y_i\l(t\wedge \hat{T}_i(v) \r)\), where \(\hat{T}_i(v)=T_i(v)-T_i(u)\). Therefore, \(Z\) is solution of 
\[
\l\{\begin{aligned}
dZ_i(t) &= \ind_{t\leq \hat{T}_i(v)}d\hat{B}_i(t) -\ind_{t\leq \hat{T}_i(v)}\l( \tilde{W}^{(u)} (\tilde{K}^{(u)}_t)^{-1} \l(Y(t)+(t\wedge \hat{T}^0)\tilde{\eta}^{(u)}\r) +\tilde{\eta}^{(u)} \r)_i dt,\\
Z_i(0) &= X_i(T_i(u)),
\end{aligned}\r.
\]
for \(i\in V\) and \(t\geq 0\). Moreover, since \(\hat{T}_i(v)<\hat{T}_i^0<\infty\) a.s. for all \(i\in V\), there exists a.s. \(\hat{T}^\infty\) large enough so that \(Z_i(t)=Y_i(\hat{T}_i(v))=X_i(T_i(v))\) for all \(i\in V\) and \(t\geq\hat{T}^\infty\).

According to Ito's lemma, for all \(t\geq 0\) we have
\begin{align*}
d\log(Z_i(t)) =&\, \ind_{t\leq \hat{T}_i(v)} \frac{d\hat{B}_i(t)}{Z_i(t)} - \ind_{t\leq \hat{T}_i(v)}\frac{dt}{2 Z_i(t)^2}\\
&  -\ind_{t\leq \hat{T}_i(v)}\l( \tilde{W}^{(u)} (\tilde{K}^{(u)}_t)^{-1} \l(Y(t)+(t\wedge \hat{T}^0)\tilde{\eta}^{(u)}\r) +\tilde{\eta}^{(u)} \r)_i \frac{dt}{Z_i(t)},
\end{align*}
where we can also replace \(t\wedge \hat{T}^0\) with \(t\), since \(\hat{T}_i(v)<\hat{T}^0\). Moreover, we denote by \(\hat{M}_i\) the following martingale for \(i\in V\) : \(\hat{M}_i(t)=\int_0^t \frac{d\hat{B}_i(s)}{Z_i(s)}\) for \(t\geq 0\).

Let us now denote 
\[
\Phi(t)= \big(\log(Z_i(t)), (T_i(u)+t)\wedge T_i(v)\big)_{i\in V}\in\R^V\times\R^V
\]
for \(t\geq 0\). Then, applying Ito's lemma to \(t\mapsto f(\Phi(t))\), we get
\begin{align*}
f(\Phi(t)) &- f(\Phi(0)) = \sum_{i\in V} \int_0^{t\wedge  \hat{T}_i(v)} \frac{\partial f}{\partial \rho_i}(\Phi(s)) d\hat{M}_i(s) + \sum_{i\in V} \int_0^{t\wedge  \hat{T}_i(v)} \frac{1}{2}\frac{\partial^2 f}{\partial\rho_i^2}(\Phi(s))\frac{ds }{Z_i(s)^2} \\
&+ \sum_{i\in V} \int_0^{t\wedge \hat{T}_i(v)} \frac{\partial f}{\partial \rho_i}(\Phi(s)) \l(-\frac{1}{2}- Z_i(s)\l( \tilde{W}^{(u)} (\tilde{K}^{(u)}_s)^{-1} \big(Y(s)+s\tilde{\eta}^{(u)}\big) +\tilde{\eta}^{(u)} \r)_i\r) \frac{ds}{Z_i(s)^2} \\
&+ \sum_{i\in V} \int_0^{t\wedge  \hat{T}_i(v)}\frac{\partial f}{\partial t_i}(\Phi(s))ds.
\end{align*}
Taking \(t\geq\hat{T}^\infty\), we get \(t\wedge \hat{T}_i(v)=\hat{T}_i(v)\) for all \(i\in V\), and 
\[
f(\Phi(t))-f(\Phi(0))=f(\rho(v),T(v))-f(\rho(u),T(u)),
\]
since \(\rho_i(w)=\log\l(X_i(T_i(w))\r)\) for \(w\in\R_+\) and \(i\in V\). For all \(i\in V\), we can now use the following time change in the corresponding integrals above : \(s=T_i(w)-T_i(u)=\hat{T}_i(w)\), \textit{i.e.} \(w=U_i(T_i(u)+s)\). Note that 
\begin{align*}
\text{for } 0\leq s \leq \hat{T}_i(v), \;\; &\frac{d}{ds}U_i(T_i(u)+s) =\frac{1}{X_i(T_i(u)+s)^2}=\frac{1}{Z_i(s)^2},\\
\text{and for } u\leq w \leq v, \;\;  &\frac{d}{dw}\hat{T}_i(w) =X_i(T_i(w))^2=e^{2\rho_i(w)}.
\end{align*}
As a result, we obtain :
\begin{align*}
f\big(\rho(v),&T(v)\big)- f\big(\rho(u),T(u)\big) \\
=&\, \sum_{i\in V} \bigg( \int_u^v \frac{\partial f}{\partial \rho_i}\big(\rho(w),T(w)\big) d\hat{M}_i(\hat{T}_i(w)) + \int_u^v \frac{1}{2}\frac{\partial^2 f}{\partial\rho_i^2}\big(\rho(w),T(w)\big)dw  \\
&+ \int_u^v \frac{\partial f}{\partial \rho_i}\big(\rho(w),T(w)\big) \l(-\frac{1}{2}- e^{\rho_i(w)}\l( \tilde{W}^{(u)} (\tilde{K}^{(u)}_{\hat{T}_i(w)})^{-1} \big(X(T_i(w))+\hat{T}_i(w)\tilde{\eta}^{(u)}\big) +\tilde{\eta}^{(u)} \r)_i\r) dw \\
&+ \int_u^v \frac{\partial f}{\partial t_i}\big(\rho(w),T(w)\big)e^{2\rho_i(w)}dw \bigg).
\end{align*}
Here, the vector \(X(T_i(w))=\big( X_j(T_i(w)) \big)_{j\in V}\) is different from \(e^{\rho(w)}=\big( X_j(T_j(w)) \big)_{j\in V}\). This is why we need to take \(v\to u\) and identify the generator.

Note that since \(\hat{B}\) is independent from  \(\mathcal{F}^{X}_{T(u)}\), we have
\[
\E \l[ \int_u^v \frac{\partial f}{\partial \rho_i}\big(\rho(w),T(w)\big) d\hat{M}_i(\hat{T}_i(w)) \middle| \mathcal{F}^{(\rho,T)}_u\r]=\E\l[ \int_0^{\hat{T}_i(v)} \frac{\partial f}{\partial \rho_i}(\Phi(t)) \frac{d\hat{B}_i(s)}{Z_i(s)}\middle| \mathcal{F}^{X}_{T(u)}\r]=0
\]
for all \(i\in V\), and therefore
\begin{align*}
\E \Big[ f&\big(\rho(v),T(v)\big) -f\big(\rho(u),T(u)\big) \Big| \mathcal{F}^{(\rho,T)}_u \Big] = \E\Bigg[\sum_{i\in V} \Bigg( \int_u^v \frac{1}{2}\frac{\partial^2 f}{\partial\rho_i^2}\big(\rho(w),T(w)\big)dw  \\
&+ \int_u^v \frac{\partial f}{\partial \rho_i}\big(\rho(w),T(w)\big) \l(-\frac{1}{2}- e^{\rho_i(w)}\l( \tilde{W}^{(u)} (\tilde{K}^{(u)}_{\hat{T}_i(w)})^{-1} \big(X(T_i(w))+\hat{T}_i(w)\tilde{\eta}^{(u)}\big) +\tilde{\eta}^{(u)} \r)_i\r) dw \\
&+ \int_u^v \frac{\partial f}{\partial t_i}\big(\rho(w),T(w)\big)e^{2\rho_i(w)}dw \Bigg)\Bigg|\rho(u),T(u)\Bigg].
\end{align*}

By continuity and dominated convergence, we can then conclude that
\begin{align*}
\lim_{v\to u^+} \frac{\E\l[ f\big(\rho(v),T(v)\big) \middle| \mathcal{F}^{(\rho,T)}_u\r]  -f\big(\rho(u),T(u)\big)}{v-u} =& \sum_{i\in V}\Bigg( \frac{1}{2}\frac{\partial^2 f}{\partial\rho_i^2}\big(\rho(u),T(u)\big)\\
+ \frac{\partial f}{\partial \rho_i}\big(\rho(u),T(u)\big) \bigg( -\frac{1}{2}- e^{\rho_i(u)} \big( \tilde{W}^{(u)} e^{\rho(u)} &+\tilde{\eta}^{(u)} \big)_i \bigg) + \frac{\partial f}{\partial t_i}\big(\rho(u),T(u)\big) e^{2\rho_i(u)}\Bigg),
\end{align*}
which is in fact \(\mathcal{L}f(u)\), where \(\mathcal{L}\) is the infinitesimal generator associated with the system of SDEs \eqref{SDE:rho}.

\subsection{Second proof of (i) : using the mixing measure and Girsanov's theorem}

This proof follows the same structure as that of Theorem \ref{thm:SDE.X} : starting from the distribution of the process as a mixture of simpler quenched processes, and computing the integral in order to identify the annealed distribution, using Girsanov's theorem.

Let us denote by \(X=(X_i(t))_{i\in V, t\geq 0}\) the canonical process in \(\mathcal{C}(\R_+,\R^V)\), and by \(\P\) the distribution on \(\mathcal{C}(\R_+,\R^V)\) under which \(X\) is solution of \eqref{SDE:X}. According to Theorem \ref{thm:SDE.X} (ii), the vector \((\beta_i)_{i\in V}=\l(\frac{1}{2T_i^0}\r)\) has distribution \(\nu_V^{W,\theta,\eta}\). Moreover, conditionally on \((T_i^0)_{i\in V}\), the marginal processes \(X_i\) for \(i\in V\) are independent \(3\)-dimensional Bessel bridges from \(\theta_i\) to \(0\) on \([0,T_i^0]\). In other words, we can write
\[
\P[\cdot]=\int \l(\bigotimes_{i\in V} \P_i^{\beta_i} [\cdot] \r) \nu_V^{W,\theta,\eta}(d\beta),
\]
where for \(i\in V\), \(\P_i^{\beta_i}\) is the distribution on \(\mathcal{C}(\R_+,\R)\) under which the canonical process \(X_i\) is a \(3\)-dimensional Bessel bridge from \(\theta_i\) to \(0\) on \([0,T_i^0]\).

We can now apply the time change independently on each marginal \(X_i\) for all \(i\in V\). According to the computations done in the proof of Lemma \ref{lem:lim.u} (see \eqref{tc.Bb}), we know that under \(\P_i^{\beta_i}\), there exists a Brownian motion \(\hat{B}_i\) such that
\[
\rho_i(u) = \log(\theta_i) + \hat{B}_i(u) + \frac{1}{2}u  + \log\l(\frac{T_i^0-T_i(u)}{T_i^0}\r)
\]
for \(u\geq 0\), where \(T_i^0=\frac{1}{2\beta}\) and \(T_i(u)=\int_0^u e^{2\rho_i(v)}dv\), \textit{i.e.}
\[
\rho_i(u) = \log(\theta_i) + \hat{B}_i(u) + \frac{1}{2}u  + \log\l(1-2\beta_i\int_0^u e^{2\rho_i(v)} dv \r).
\]

For each \(i\in V\), let us now define a martingale \(L_i\) by
\[
L_i(u)= \int_0^u \l( -\frac{1}{2} + \frac{2\beta_i e^{2\rho_i(v)}}{1-2\beta_i\int_0^v e^{2\rho_i(s)}ds} \r) d\hat{B}_i(v)
\]
for \(u\geq 0\), so that \(\rho_i(u)= \hat{B}_i(u) - \langle\hat{B}_i,L_i\rangle_u\). We can then introduce a probability distribution \(\hat\P_i\) such that for all \(u\geq 0\),
\[
\E\l[\frac{d\hat\P_i}{d\P_i^{\beta_i}}\middle|\calF^i_u\r]=\calE(L_i)(u),
\]
where \(\calF^i_u=\sigma\l(\hat{B}_i(v),0\leq v\leq u\r)\), and \(\calE(L_i)(u)=e^{L_i(u)-\frac{1}{2}\langle L_i,L_i\rangle_u}\) is the exponential martingale associated with \(L_i\). Then by Girsanov's theorem, \(\rho_i\) is a standard Brownian motion under \(\hat\P_i\). Note that \(\hat\P_i\) does not depend on \(\beta_i\).

From now on, let us write \(\phi_i(u)=1-2\beta_i\int_0^u e^{2\rho_i(v)}dv=\frac{T_i^0-T_i(u)}{T_i^0}\) for \(u\geq 0\) and \(i\in V\). The following lemma gives an expression of \(\calE(L_i)\).

\begin{lem}
For \(i\in V\) and \(u\geq 0\), define
\[
E_i(u)=\exp\l(-\theta_i^2\beta_i +\frac{\beta_i e^{2\rho_i(u)}}{\phi_i(u)} -\frac{1}{2}\rho_i(u)+\frac{1}{8}u\r) \phi_i(u)^{3/2}\sqrt{\theta_i}.
\]
Then \(\calE(L_i)=E_i\).
\end{lem}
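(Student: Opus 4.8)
The strategy is to verify directly that $E_i$ is the exponential martingale $\calE(L_i)$ by checking that $E_i(0) = 1$ and that $E_i$ solves the SDE $dE_i(u) = E_i(u)\,dL_i(u)$, i.e. $dE_i = E_i \big( -\tfrac{1}{2} + \tfrac{2\beta_i e^{2\rho_i}}{\phi_i} \big)\, d\hat{B}_i$. Since $\calE(L_i)$ is by definition the unique solution of this linear SDE with initial value $1$, this identification will suffice. Recall that throughout this computation $\rho_i$ satisfies $d\rho_i = d\hat{B}_i + \big( \tfrac{1}{2} - \tfrac{2\beta_i e^{2\rho_i}}{\phi_i} \big)\,du$ (from the expression $\rho_i(u) = \hat{B}_i(u) - \langle \hat{B}_i, L_i\rangle_u$), and $\phi_i$ is of finite variation with $d\phi_i(u) = -2\beta_i e^{2\rho_i(u)}\,du$.

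First I would check the initial condition: at $u=0$ we have $\rho_i(0) = \log(\theta_i)$ and $\phi_i(0) = 1$, so the exponential factor is $\exp(-\theta_i^2\beta_i + \beta_i\theta_i^2 - \tfrac{1}{2}\log\theta_i) = \theta_i^{-1/2}$, and multiplying by $\phi_i(0)^{3/2}\sqrt{\theta_i} = \sqrt{\theta_i}$ gives $E_i(0) = 1$, as required. Next I would write $E_i(u) = \exp(G_i(u))\,\phi_i(u)^{3/2}\sqrt{\theta_i}$ with $G_i(u) = -\theta_i^2\beta_i + \tfrac{\beta_i e^{2\rho_i(u)}}{\phi_i(u)} - \tfrac{1}{2}\rho_i(u) + \tfrac{1}{8}u$, and apply Itô's formula. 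The only source of stochastic (martingale) part is through $\rho_i$, so I would compute $dG_i$ using $d\rho_i$ and $d\phi_i$ above, being careful with the quadratic variation terms $d\langle\rho_i\rangle_u = du$ coming both from $e^{2\rho_i}$ and, via the quotient $e^{2\rho_i}/\phi_i$, from the composition. Then Itô on $\exp(G_i)\cdot \phi_i^{3/2}$ combines $d(\exp G_i)$, $d(\phi_i^{3/2}) = \tfrac{3}{2}\phi_i^{1/2}\,d\phi_i$, and (no cross term, since $\phi_i$ has finite variation).

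The bulk of the work is the drift cancellation: after expanding, the $du$-coefficient of $dE_i/E_i$ must vanish identically, leaving only the $d\hat{B}_i$ term with coefficient $-\tfrac{1}{2} + \tfrac{2\beta_i e^{2\rho_i}}{\phi_i}$. Concretely, the martingale part of $d(\tfrac{\beta_i e^{2\rho_i}}{\phi_i})$ is $\tfrac{2\beta_i e^{2\rho_i}}{\phi_i}\,d\hat{B}_i$ and the martingale part of $-\tfrac{1}{2}\rho_i$ is $-\tfrac{1}{2}\,d\hat{B}_i$, which already produces exactly the desired Brownian coefficient; so everything else — the $\tfrac{1}{8}u$ term, the $-\tfrac{1}{2}$ drift in $d\rho_i$, the Itô correction $\tfrac{1}{2}d\langle G_i\rangle$, the $\tfrac{3}{2}\phi_i^{1/2}d\phi_i/\phi_i^{3/2} = \tfrac{3}{2}\,d\phi_i/\phi_i = -3\beta_i e^{2\rho_i}/\phi_i\,du$ term from the $\phi_i^{3/2}$ factor, and the extra drift $-\tfrac{2\beta_i e^{2\rho_i}}{\phi_i}$ hidden in $d\rho_i$ feeding into the $e^{2\rho_i}/\phi_i$ and $\rho_i$ terms — must sum to zero. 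I expect this algebraic cancellation to be the main obstacle: it requires carefully tracking several $\beta_i e^{2\rho_i}/\phi_i$ and $\beta_i^2 e^{4\rho_i}/\phi_i^2$ contributions. Once the drift is shown to vanish, we conclude $dE_i = E_i\big(-\tfrac{1}{2} + \tfrac{2\beta_i e^{2\rho_i}}{\phi_i}\big)d\hat{B}_i$ with $E_i(0)=1$, hence $E_i = \calE(L_i)$ by uniqueness of the stochastic exponential. (One should also note that $\calE(L_i)$ is a genuine martingale, not merely a local martingale, which follows since under $\hat{\P}_i$, $\rho_i$ is a Brownian motion and $E_i$ is expressed via $\rho_i$ and $\phi_i$ which remains in $(0,1]$; alternatively this is guaranteed a posteriori by the Girsanov construction already carried out.)
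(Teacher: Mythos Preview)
Your proposal is correct and follows essentially the same approach as the paper: verify $E_i(0)=1$ and show via It\^o's formula that $dE_i/E_i = dL_i$, then conclude by uniqueness of the stochastic exponential. The only cosmetic difference is that the paper first substitutes the relation $\rho_i(u)=\hat B_i(u)+\tfrac12 u+\log\phi_i(u)$ to rewrite $E_i$ directly as a function of $\hat B_i$, $u$, and $\phi_i$ (which reduces the power of $\phi_i$ from $3/2$ to $1$ and makes the It\^o expansion slightly shorter), whereas you work with the original expression in terms of $\rho_i$ and use the SDE $d\rho_i=d\hat B_i+\big(\tfrac12-\tfrac{2\beta_i e^{2\rho_i}}{\phi_i}\big)du$; the drift cancellation you outline does indeed go through.
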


\begin{proof}
It suffices to show that \(\frac{dE_i(u)}{E_i(u)}=dL_i(u)\) for all \(u\geq 0\), which will imply that \(E_i=\calE(L_i)\), since \(E_i(0)=1\) almost surely. Note that \(\rho_i(u) = \hat{B}_i(u) + \frac{1}{2}u + \log (\phi_i(u))\), so that
\[
E_i(u)=\exp\l(-\theta_i^2\beta_i +\beta_i\phi_i(u) e^{2\hat{B}_i(u) + u} -\frac{1}{2}\hat{B}_i(u)-\frac{1}{8}u\r) \phi_i(u)\sqrt{\theta_i}.
\]

By Ito's lemma, for \(u\geq 0\) we have
\begin{align*}
dE_i(u) &= \l(2\beta_i\phi_i(u)e^{2\hat{B}_i(u)+u}-\frac{1}{2}\r)E_i(u) d\hat{B}_i(u) \\
& \qquad + \frac{1}{2}\l(\l(2\beta_i\phi_i(u)e^{2\hat{B}_i(u)+u}-\frac{1}{2}\r)^2+4\beta_i\phi_i(u)e^{2\hat{B}_i(u)+u}\r)E_i(u) du \\
& \qquad + \l(\beta_i\l(\phi_i(u)+\phi_i'(u)\r)e^{2\hat{B}_i(u)+u}-\frac{1}{8}+\frac{\phi_i'(u)}{\phi_i(u)}\r)E_i(u) du.
\end{align*}
Since \(\phi_i'(u)=-2\beta_i e^{2\rho_i(u)}=-2\beta_i\phi_i(u)^2 e^{2\hat{B}_i(u)+u}\), we get
\begin{align*}
\frac{dE_i(u)}{E_i(u)} &= \l(-\frac{1}{2}+\frac{2\beta_i e^{2\rho_i(u)}}{\phi_i(u)}\r) d\hat{B}_i(u) +\l( 2\beta_i^2\phi_i(u)^2 e^{4\hat{B}_i(u)+2u} +\frac{1}{8}+\beta_i\phi_i(u)e^{2\hat{B}_i(u)+u} \r. \\
&\l.\qquad + \beta_i\phi_i(u)e^{2\hat{B}_i(u)+u} -2\beta_i^2\phi_i(u)^2 e^{4\hat{B}_i(u)+2u} -\frac{1}{8} -2\beta_i\phi_i(u)e^{2\hat{B}_i(u)+u}\r) du \\
&= dL_i(u).
\end{align*}

\end{proof}

Fix \(u\geq 0\), then for any event \(A_u\in\calF_u^\rho=\sigma\l(\rho(v),0\leq v\leq u\r)\), we have
\begin{align*}
\P[A_u] & = \int \l(\bigotimes_{i\in V} \P_i^{\beta_i}[A_u]\r)\nu_V^{W,\theta,\eta} = \int \int_{A_u} \prod_{i\in V} \l(E_i(u)^{-1}d\hat{\P}_i\r) \nu_V^{W,\theta,\eta}(d\beta) \\
& = \int_{A_u} D(u) d\hat{\P} ,
\end{align*}
where \(\hat{\P}=\bigotimes_{i\in V}\hat{\P}_i\) and for \(u\geq 0\),
\[
D(u)=\int\l(\prod_{i\in V} E_i(u)^{-1}\r)\nu_V^{W,\theta,\eta}(d\beta).
\]
We now have to compute \(D(u)\), and express it as an exponential martingale, in order to apply Girsanov's theorem once again, and identify the distribution of \(\rho\) under \(\P\).

For \(u\geq 0\), we have
\begin{align*}
D(u)&=\int \exp\l( \sum_{i\in V}\l(\theta_i^2\beta_i-\frac{\beta_i e^{2\rho_i(u)}}{\phi_i(u)}+\frac{1}{2}\rho_i(u)-\frac{1}{8}u\r) \r)\frac{1}{\prod_{i\in V}\phi_i(u)^{3/2}\sqrt{\theta_i}} \\
		&\qquad \ind_{H_\beta>0}\l(\frac{2}{\pi}\r)^{|V|/2}\exp\l(-\frac{1}{2}\langle\theta,H_\beta\theta\rangle-\frac{1}{2}\langle\eta,(H_\beta)^{-1}\eta\rangle+\langle\eta,\theta\rangle\r)\frac{\prod_{i\in V}\theta_i d\beta_i}{\sqrt{|H_\beta|}}.
\end{align*}
In order to compute this integral, we will introduce a change of variables, and obtain an integral against the distribution \(\nu_V^{\tilde{W}^{(u)}, \tilde{\theta}^{(u)}, \tilde\eta^{(u)}}\), where \(\tilde{W}^{(u)}\), \(\tilde{\theta}^{(u)}\) and \(\tilde\eta^{(u)}\) are new parameters depending on the trajectory of \(\rho\) up to time \(u\).

Let us introduce the following notations : for \(u\geq 0\),
\[
\l\{ \begin{aligned}
\beta_i^{(u)} &= \frac{1}{2T_i(u)} \text{ for } i\in V\\
H^{(u)} &= 2\beta^{(u)}-W \\
K^{(u)} &= T(u) H^{(u)} = \Id - T(u)W.
\end{aligned}\r. 
\]
From there, we define the new following parameters : 
\[
\l\{ \begin{aligned}
\tilde{W}^{(u)} &= W (K^{(u)})^{-1} =W+W(H^{(u)})^{-1}W \\
\tilde\eta^{(u)} &= \tilde{W}^{(u)}T(u)\eta + \eta\\
\tilde\theta_i^{(u)} &= e^{\rho_i(u)}\text{ for } i\in V
\end{aligned}\r. 
\]
as well as these associated quantities :
\[
\l\{ \begin{aligned}
\tilde{T}_i(u) &= \frac{1}{2\beta_i}-T_i(u) =\frac{\phi_i(u)}{2\beta_i}\text{ for } i\in V \\
\tilde\beta_i^{(u)} &= \frac{1}{2\tilde{T}_i(u)} =\frac{\beta_i}{\phi_i(u)} \text{ for } i\in V\\
\tilde{H}^{(u)} &= 2\tilde\beta^{(u)}-\tilde{W}^{(u)} \\
\tilde{K}^{(u)} &= \tilde{T}(u) \tilde{H}^{(u)} = \Id - \tilde{T}(u)\tilde{W}^{(u)}.
\end{aligned}\r. 
\]

Using these new notations, we can already write
\begin{equation}\label{tech1}
\sum_{i\in V}\frac{\beta_i e^{2\rho_i(u)}}{\phi_i(u)}=\sum_{i\in V}(\tilde\theta_i^{(u)})^2 \tilde\beta_i^{(u)}=\frac{1}{2}\l\langle\tilde\theta^{(u)}, \l(\tilde{H}^{(u)}+\tilde{W}^{(u)}\r) \tilde\theta^{(u)}\r\rangle
\end{equation}
for \(u\geq 0\). Moreover,we will need the following technical lemma in order to express \(D(u)\) as an integral against \(\nu_V^{\tilde{W}^{(u)}, \tilde{\theta}^{(u)}, \tilde\eta^{(u)}}\).

\begin{lem}[Lemma 2 in \cite{SabZenEDS}]\label{lem:tech}
\begin{itemize} For \(u\geq 0\), we have :
\item[(i)] \(K_{1/2\beta}=\tilde{K}^{(u)}K^{(u)}\)
\item[(ii)] \(\tilde\eta^{(u)} = T(u)^{-1}(H^{(u)})^{-1}\eta \)
\item[(iii)] \(\langle\tilde\eta^{(u)},(\tilde{H}^{(u)})^{-1}\tilde\eta^{(u)}\rangle = \langle\eta,H_\beta^{-1}\eta\rangle - \langle\eta,(H^{(u)})^{-1}\eta\rangle.\)
\end{itemize}
\end{lem}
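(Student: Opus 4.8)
The plan is to prove the three identities in order, using only linear algebra, the definitions of the tilded quantities, and part (i) as a stepping stone for parts (ii) and (iii). All matrices are $V\times V$ and the key relations to keep in front of us are $K^{(u)}=\Id-T(u)W$, $\tilde W^{(u)}=W(K^{(u)})^{-1}$, $\tilde\eta^{(u)}=\tilde W^{(u)}T(u)\eta+\eta$, $\tilde T_i(u)=\frac{1}{2\beta_i}-T_i(u)$, and $\tilde K^{(u)}=\Id-\tilde T(u)\tilde W^{(u)}$. Throughout I will suppress the superscript $(u)$ when it is unambiguous.

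\textbf{Part (i).} I would compute $\tilde K K$ directly. Write $\tilde K K=(\Id-\tilde T\tilde W)K=K-\tilde T\tilde W K$. Since $\tilde W K=W(K)^{-1}K=W$, this gives $\tilde K K=K-\tilde T W=\Id-T W-\tilde T W=\Id-(T+\tilde T)W$. Now $T_i+\tilde T_i=T_i(u)+\bigl(\frac{1}{2\beta_i}-T_i(u)\bigr)=\frac{1}{2\beta_i}$ for every $i$, so $T+\tilde T=\frac{1}{2\beta}$ as diagonal matrices, hence $\tilde K K=\Id-\frac{1}{2\beta}W=K_{1/2\beta}$, which is (i). (A small remark: $K^{(u)}$ and $\tilde K^{(u)}$ are invertible on the relevant event because $H_\beta>0$ and $T(u)<T^0$ coordinatewise make the intermediate matrices positive; this is exactly the setting in which $\nu_V^{\tilde W,\tilde\theta,\tilde\eta}$ is defined, so I would just note it.)

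\textbf{Part (ii).} The claim is $\tilde\eta=T^{-1}(H^{(u)})^{-1}\eta$ where $H^{(u)}=2\beta^{(u)}-W$ and $\beta^{(u)}_i=\frac{1}{2T_i(u)}$, so that $T(u)H^{(u)}=K^{(u)}$, i.e. $(H^{(u)})^{-1}=(K^{(u)})^{-1}T(u)$. Thus the right-hand side is $T^{-1}(K^{(u)})^{-1}T\eta$. On the other hand $\tilde\eta=\tilde W T\eta+\eta=W(K)^{-1}T\eta+\eta$. So I must check $W(K)^{-1}T\eta+\eta=T^{-1}(K)^{-1}T\eta$, equivalently $T^{-1}(K)^{-1}T-W(K)^{-1}T=\Id$ applied to $\eta$; better, multiply the desired identity on the left by $T$: need $TW(K)^{-1}T\eta+T\eta=(K)^{-1}T\eta$, i.e. $\bigl((K)^{-1}-TW(K)^{-1}-\Id\bigr)T\eta=0$. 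But $(K)^{-1}-TW(K)^{-1}=(\Id-TW)(K)^{-1}=K(K)^{-1}=\Id$, so the bracket is $0$ identically, giving (ii). (Note $T$ is invertible for $u>0$; for $u=0$ one reads the identity in the limiting sense, or simply observes $T(0)=0$, $\tilde\eta^{(0)}=\eta$, $H^{(0)}$ degenerate — I would phrase (ii) for $u>0$ and treat $u=0$ separately if needed.)

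\textbf{Part (iii).} This is the one I expect to be the main obstacle, since it mixes the inverses of three different "$H$" matrices. The strategy is to use (i) and (ii) to reduce everything to $H_\beta=2\beta-W$. From (i), $K_{1/2\beta}=\tilde K K$; and $K_{1/2\beta}=\frac{1}{2\beta}H_\beta$ (using $K_t=tH_{1/2t}$ with $t=\frac{1}{2\beta}$), while $K=TH^{(u)}$ and $\tilde K=\tilde T\tilde H$. So $\frac{1}{2\beta}H_\beta=\tilde T\tilde H\,T H^{(u)}$, hence $H_\beta^{-1}=(H^{(u)})^{-1}T^{-1}\tilde H^{-1}\tilde T^{-1}\cdot 2\beta$. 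Wait — I should instead aim directly at the scalar identity. Rewrite the target as
\[
\langle\tilde\eta,\tilde H^{-1}\tilde\eta\rangle+\langle\eta,(H^{(u)})^{-1}\eta\rangle=\langle\eta,H_\beta^{-1}\eta\rangle.
\]
Using (ii), $\tilde\eta=T^{-1}(H^{(u)})^{-1}\eta$, so $\langle\tilde\eta,\tilde H^{-1}\tilde\eta\rangle=\langle(H^{(u)})^{-1}\eta,\,T^{-1}\tilde H^{-1}T^{-1}(H^{(u)})^{-1}\eta\rangle$ (using symmetry of $H^{(u)}$ and that $T$, $\tilde T$ are diagonal hence symmetric). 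Setting $\xi=(H^{(u)})^{-1}\eta$, the left side becomes $\langle\xi,T^{-1}\tilde H^{-1}T^{-1}\xi\rangle+\langle\eta,\xi\rangle$. Now I want this to equal $\langle\eta,H_\beta^{-1}\eta\rangle$. From $\frac{1}{2\beta}H_\beta=\tilde K K=\tilde T\tilde H\, T H^{(u)}$ I get $H_\beta^{-1}=(H^{(u)})^{-1}T^{-1}\tilde H^{-1}\tilde T^{-1}(2\beta)$; but $\tilde T^{-1}\cdot 2\beta$ — recall $\tilde\beta_i=\frac{1}{2\tilde T_i}$, so $\tilde T^{-1}=2\tilde\beta$ and $\tilde T^{-1}(2\beta)=4\tilde\beta\beta$... this is getting messy, which is precisely why (iii) is the crux. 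The clean route, I believe, is the resolvent/Schur-complement identity: $H_\beta=2\beta-W$ and $H^{(u)}=2\beta^{(u)}-W$ differ only on the diagonal, $H_\beta-H^{(u)}=2(\beta-\beta^{(u)})$, and $2(\beta_i-\beta^{(u)}_i)=\frac{1}{T^0_i}-\frac{1}{T_i(u)}$; meanwhile $\tilde H^{(u)}$ is built to be the "Schur-type" correction governing exactly this diagonal shift. So I would invoke the algebraic identity
\[
H_\beta^{-1}-(H^{(u)})^{-1}=(H^{(u)})^{-1}\,\bigl(2(\beta^{(u)}-\beta)\bigr)\,H_\beta^{-1}
\]
(the second resolvent identity for the diagonal perturbation), then plug in (i) and (ii) to rewrite the right-hand side as $T^{-1}\tilde H^{-1}T^{-1}$ conjugated appropriately, and finally pair with $\eta$ on both sides. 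Concretely: $\langle\eta,H_\beta^{-1}\eta\rangle-\langle\eta,(H^{(u)})^{-1}\eta\rangle=\langle\eta,(H^{(u)})^{-1}2(\beta^{(u)}-\beta)H_\beta^{-1}\eta\rangle=\langle\xi,\,2(\beta^{(u)}-\beta)H_\beta^{-1}\eta\rangle$, and one checks via (i)+(ii) that $2(\beta^{(u)}-\beta)H_\beta^{-1}\eta=T^{-1}\tilde H^{-1}T^{-1}\xi$, after which the left side is $\langle\xi,T^{-1}\tilde H^{-1}T^{-1}\xi\rangle=\langle\tilde\eta,\tilde H^{-1}\tilde\eta\rangle$ by (ii) again. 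The verification $2(\beta^{(u)}-\beta)H_\beta^{-1}\eta=T^{-1}\tilde H^{-1}T^{-1}\xi$ is the one genuine computation; since the statement is quoted as Lemma 2 in \cite{SabZenEDS}, I would either reproduce that computation or simply cite it, noting that $\tilde\beta_i-\beta^{(u)}_i$ and the diagonal factors $\frac{1}{T_i(u)}$, $\frac{1}{\tilde T_i(u)}$, $\frac{1}{T^0_i}$ combine by the elementary scalar identity $\frac{1}{a}-\frac{1}{a+b}=\frac{b}{a(a+b)}$ applied coordinatewise after the matrix reductions.
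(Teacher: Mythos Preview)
The paper does not prove this lemma at all: it simply quotes it as Lemma~2 of \cite{SabZenEDS}. So your proposal already goes further than the paper. Your arguments for (i) and (ii) are correct and complete; the only comment is that (ii) as written is indeed for \(u>0\) (where \(T(u)\) is invertible), and at \(u=0\) the identity is trivial since \(\tilde\eta^{(0)}=\eta\) and \((H^{(0)})^{-1}=0\) formally, so no separate argument is needed in practice.

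For (iii) your strategy via the second resolvent identity is the right one, and the step you flag as ``the one genuine computation'' is in fact a one-line diagonal check, not a real obstacle. From (i) in the form \(H_\beta = H^{(u)}\,T\,\tilde H\,\phi\) (obtained by transposing \(\phi\,\tilde H\,T\,H^{(u)}\), all factors being symmetric), one gets
\[
H_\beta^{-1}\eta=\phi^{-1}\tilde H^{-1}T^{-1}(H^{(u)})^{-1}\eta=\phi^{-1}\tilde H^{-1}\tilde\eta,
\]
using (ii) for the last equality. Hence
\[
2(\beta^{(u)}-\beta)\,H_\beta^{-1}\eta = 2(\beta^{(u)}-\beta)\,\phi^{-1}\,\tilde H^{-1}\tilde\eta,
\]
and coordinatewise \(2(\beta_i^{(u)}-\beta_i)/\phi_i=\bigl(\tfrac{1}{T_i}-2\beta_i\bigr)/\bigl(1-2\beta_iT_i\bigr)=\tfrac{1}{T_i}\). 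Thus \(2(\beta^{(u)}-\beta)H_\beta^{-1}\eta=T^{-1}\tilde H^{-1}\tilde\eta=T^{-1}\tilde H^{-1}T^{-1}\xi\), which is exactly the identity you needed; pairing with \(\xi=(H^{(u)})^{-1}\eta\) and using (ii) once more gives \(\langle\tilde\eta,\tilde H^{-1}\tilde\eta\rangle\). So your plan for (iii) closes with no further ideas required---you should simply write out this diagonal identity rather than deferring it.
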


Using Lemma \ref{lem:tech} (i), we get that for \(u\geq 0\),
\[
H_\beta=2\beta K_{1/2\beta}=2\beta\tilde{K}^{(u)}K^{(u)}=2\beta\tilde{T}(u)\tilde{H}^{(u)}K^{(u)},
\]
where \(2\beta_i\tilde{T}_i(u)=1-\frac{\beta_i}{\beta_i^{(u)}}=\phi_i(u)\) for \(i\in V\). Therefore, we have
\begin{equation}\label{tech2}
\prod_{i\in V}\phi_i(u)^{3/2}\sqrt{|H_\beta|}=\prod_{i\in V}\phi_i(u)^2\sqrt{|\tilde{H}^{(u)}|}\sqrt{|K^{(u)}|},
\end{equation}
where 
\[
\frac{d\tilde\beta_i^{(u)}}{d\beta_i}=\frac{1}{\Big(1-\frac{\beta_i}{\beta_i^{(u)}}\Big)^2}=\frac{1}{\phi_i(u)^2}.
\]
Moreover, for all \(u\geq 0\) we have :
\begin{equation}\label{tech3}
\ind_{H_\beta>0}=\ind_{H(u)>0}\ind_{\tilde{H}(u)>0}.
\end{equation}

Combining equations \eqref{tech1}, \eqref{tech2} and \eqref{tech3}, as well as Lemma \ref{lem:tech} (iii), we finally obtain :
\begin{align*}
D(u) &= \l( \int \ind_{\tilde{H}^{(u)}>0}\l(\frac{2}{\pi}\r)^{|V|/2}\exp\l( -\frac{1}{2}\langle\tilde{\theta}^{(u)},\tilde{H}^{(u)}\tilde{\theta}^{(u)}\rangle -\frac{1}{2}\langle\tilde\eta^{(u)},(\tilde{H}^{(u)})^{-1}\tilde\eta^{(u)}\rangle +\langle\tilde\eta^{(u)},\tilde\theta^{(u)}\rangle\r) \r. \\
		&\qquad \l. \frac{\prod_{i\in V}\tilde\theta_i^{(u)}}{\sqrt{|\tilde{H}^{(u)}|}}\prod_{i\in V}\frac{d\tilde\beta_i^{(u)}}{d\beta_i}d\beta_i\r) \ind_{H^{(u)}>0} \exp\l(\frac{1}{2}\langle\theta,W\theta\rangle+\langle\eta,\theta\rangle\r)\prod_{i\in V}\sqrt{\theta_i} \\
		&\qquad \exp\l(-\frac{1}{2}\langle\tilde\theta^{(u)},\tilde{W}^{(u)}\tilde\theta^{(u)}\rangle -\frac{1}{2}\langle\eta,(H^{(u)})^{-1}\eta\rangle -\langle\tilde\eta^{(u)},\tilde\theta^{(u)}\rangle\r)\frac{\prod_{i\in V}\exp\l(\frac{1}{2}\rho_i(u)-\frac{1}{8}u\r)}{\sqrt{|K^{(u)}|}\prod_{i\in V}\tilde\theta_i^{(u)}} \\
	&= \ind_{H^{(u)}>0} \exp\l(-\frac{1}{2}\langle\tilde\theta^{(u)},\tilde{W}^{(u)}\tilde\theta^{(u)}\rangle -\frac{1}{2}\langle\eta,(H^{(u)})^{-1}\eta\rangle -\langle\tilde\eta^{(u)},\tilde\theta^{(u)}\rangle \r) \\
		&\qquad \frac{\prod_{i\in V}\exp\l(-\frac{1}{2}\rho_i(u)-\frac{1}{8}u\r)}{\sqrt{|K^{(u)}|}}\exp\l(\frac{1}{2}\langle\theta,W\theta\rangle+\langle\eta,\theta\rangle\r)\prod_{i\in V}\sqrt{\theta_i},
\end{align*}
since the integral between brackets becomes
\[
\int \nu_V^{\tilde{W}^{(u)},\tilde\theta^{(u)},\tilde\eta^{(u)}}(d\tilde{\beta}^{(u)}) =1.
\]

Let us now show that \(D\) is the exponential martingale associated with a certain \(\calF_u^\rho\)-martingale. By Ito's lemma, for \(u\geq 0\) we have
\begin{align*}
dD(u) &= \sum_{i\in V} \l( -(\tilde{W}^{(u)} e^{\rho(u)})_i e^{\rho_i(u)} - \tilde{\eta}_i^{(u)} e^{\rho_i(u)} -\frac{1}{2}\r) D(u) d\rho_i(u) \\
	&\qquad + \frac{1}{2} \sum_{i\in V} \Biggl( \l( -(\tilde{W}^{(u)} e^{\rho(u)})_i e^{\rho_i(u)} - \tilde{\eta}_i^{(u)} e^{\rho_i(u)} -\frac{1}{2}\r)^2  \\
	&\qquad \qquad +  \l( -(\tilde{W}^{(u)} e^{\rho(u)})_i e^{\rho_i(u)} -\tilde{W}^{(u)}_{i,i} e^{2\rho_i(u)} - \tilde{\eta}_i^{(u)} e^{\rho_i(u)} \r) \Biggr) D(u) du \\
	& \qquad + \l(-\frac{1}{2}\langle e^{\rho(u)},\partial_u (\tilde{W}^{(u)}) e^{\rho(u)}\rangle -\frac{1}{2}\langle \eta,\partial_u (H^{(u)})^{-1}\eta\rangle \r.  \\
	&\qquad\qquad \l. - \langle\partial_u\tilde\eta^{(u)},e^{\rho(u)}\rangle  - \frac{|V|}{8} -\frac{1}{2}\frac{\partial_u|K^{(u)}|}{|K^{(u)}|} \r) D(u) du.
\end{align*}
Since \(H^{(u)}=2\beta^{(u)}-W=1/T(u)-W\), we have
\[
\partial_u (H^{(u)})^{-1}=(H^{(u)})^{-1}T(u)^{-1}\partial_u (T(u)) T(u)^{-1}(H^{(u)})^{-1},
\]
so that, using Lemma \ref{lem:tech} (ii), we get
\begin{align*}
\langle\eta,\partial_u (H^{(u)})^{-1}\eta\rangle &= \langle T(u)^{-1}(H^{(u)})^{-1}\eta,e^{2\rho(u)}T(u)^{-1}(H^{(u)})^{-1}\eta\rangle \\
	&=\langle\tilde\eta^{(u)},e^{2\rho(u)}\tilde\eta^{(u)}\rangle = \sum_{i\in V}(\tilde\eta_i^{(u)})^2 e^{2\rho_i(u)}.
\end{align*}
Moreover, \(\tilde{W}^{(u)}=W+W (H^{(u)})^{-1} W\), therefore 
\begin{align*}
\langle e^{\rho(u)},\partial_u (\tilde{W}^{(u)})e^{\rho(u)}\rangle &= \langle e^{\rho(u)},W (H^{(u)})^{-1}T(u)^{-1} e^{2\rho(u)} T(u)^{-1}(H^{(u)})^{-1} W e^{\rho(u)}\rangle \\
	&= \langle e^{\rho(u)}, \tilde{W}^{(u)}e^{2\rho(u)}\tilde{W}^{(u)} e^{\rho(u)}\rangle = \sum_{i\in V} (\tilde{W}^{(u)}e^{\rho(u)})_i^2 e^{2\rho_i(u)},
\end{align*}
and \(\tilde\eta^{(u)}=\tilde{W}^{(u)}T(u)\eta +\eta\), so
\begin{align*}
\langle\partial_u\tilde\eta^{(u)},e^{\rho(u)}\rangle &= \langle \partial_u(\tilde{W}^{(u)})T(u)\eta +\tilde{W}^{(u)} \partial_u(T(u))\eta,e^{\rho(u)}\rangle \\
&= \langle \tilde{W}^{(u)}e^{2\rho(u)}\tilde{W}^{(u)}T(u)\eta +\tilde{W}^{(u)} e^{2\rho(u)}\eta,e^{\rho(u)}\rangle\\
&= \langle \tilde{W}^{(u)}e^{2\rho(u)}\tilde\eta^{(u)},e^{\rho(u)}\rangle = \sum_{i\in V} \l(\tilde{W}^{(u)}e^{\rho(u)}\r)_i\tilde\eta_i^{(u)}e^{2\rho_i(u)}
\end{align*}
Finally, we have
\begin{align*}
\partial_u |K^{(u)}| &= \Tr\big(|K^{(u)}|(K^{(u)})^{-1}\partial_u K^{(u)}\big)= -|K^{(u)}|\Tr\big(W (K^{(u)})^{-1} e^{2\rho(u)}\big)\\
 &= -|K^{(u)}| \sum_{i\in V}\tilde{W}_{i,i}^{(u)}e^{2\rho_i(u)}.
\end{align*}

Therefore, we get :
\begin{align*}
\frac{dD(u)}{D(u)} &= \sum_{i\in V} \l( - \l(\tilde{W}^{(u)}\l(e^{\rho(u)} +T(u)\eta\r) + \eta \r)_i e^{\rho_i(u)} -\frac{1}{2}\r) d\rho_i(u) \\
 & \;\; + \frac{1}{2}\sum_{i\in V} \l( (\tilde{W}^{(u)}e^{\rho(u)})_i^2 e^{2\rho_i(u)} + (\tilde\eta_i^{(u)})^2 e^{2\rho_i(u)} + \frac{1}{4}  + 2\l(\tilde{W}^{(u)}e^{\rho(u)}\r)_i\tilde\eta_i^{(u)}e^{2\rho_i(u)}\r. \\
 & \qquad + (\tilde{W}^{(u)}e^{\rho(u)})_i e^{\rho_i(u)} + \tilde{\eta}_i^{(u)}e^{\rho_i(u)}  -(\tilde{W}^{(u)}e^{\rho(u)})_i e^{\rho_i(u)} -\tilde{W}_{i,i}^{(u)}e^{2\rho_i(u)} - \tilde{\eta}_i^{(u)}e^{\rho_i(u)} \\
 &\qquad \l. -(\tilde{W}^{(u)}e^{\rho(u)})_i^2 e^{2\rho_i(u)} - (\tilde\eta_i^{(u)})^2 e^{2\rho_i(u)} - 2\l(\tilde{W}^{(u)}e^{\rho(u)}\r)_i\tilde\eta_i^{(u)}e^{2\rho_i(u)} -\frac{1}{4} + \tilde{W}_{i,i}^{(u)}e^{2\rho_i(u)} \r) du \\
 &= \sum_{i\in V} \l( - \l(\tilde{W}^{(u)}\l(e^{\rho(u)} +T(u)\eta\r) + \eta \r)_i e^{\rho_i(u)} -\frac{1}{2}\r) d\rho_i(u)  = d \tilde{L}(u),
\end{align*}
where for \(i\in V\) and \(u\geq 0\),
\[
\tilde{L}_i(u) =\int_0^u \l(-\frac{1}{2} - \l(\tilde{W}(u)\l(e^{\rho(u)} +T(u)\eta\r) + \eta \r)_i e^{\rho_i(u)}\r)d\rho_i(u).
\]
Therefore, \(D\) is the exponential martingale associated with \(\tilde{L}\).

Recall that for \(u\geq 0\) and any event \(A_u\in\calF_u^\rho=\sigma\l(\rho(v),0\leq v\leq u\r)\), we have
\[
\P[A_u] = \int_{A_u} D(u) d\hat{\P},
\]
\textit{i.e.} \(\P\) is such that
\[
\E\l[\frac{d\P}{d\hat\P} \middle| \calF_u^\rho\r]=\calE(\tilde{L})(u)
\]
for all \(u\geq 0\). Moreover, \(\hat{\P}=\bigotimes_{i\in V}\hat{\P}_i\), therefore \(\rho\) is a \(|V|\)-dimensional standard Brownian motion under \(\hat\P\). According to Girsanov's theorem, the process \(\tilde{B}(u)=\rho(u)-\langle\rho,\tilde{L}\rangle_u\) is a standard Brownian motion under \(\P\). In other words, under \(\P\), the process \(\rho\) verifies the following SDE : for all \(i\in V\) and \(u\geq 0\), 
\[
d\rho_i(u)=d\tilde{B}_i(u)-\frac{1}{2}du - \l(\tilde{W}(u)(e^{\rho(u)}+T(u)\eta)+\eta\r)_i e^{\rho_i(u)}du.
\]

\bibliographystyle{plain}
\bibliography{biblio}

\end{document}